\newtheorem{theorem}{Theorem}[section]
\newtheorem{lemma}[theorem]{Lemma}
\newtheorem{proposition}[theorem]{Proposition}
\newtheorem{corollary}[theorem]{Corollary}
\newtheorem{definition}[theorem]{Definition}
\newtheorem{assumptions}[theorem]{Assumptions}
\newdefinition{remark}{Remark}
\journal{a journal}
\begin{document}

\begin{frontmatter}



\title{RNN-BSDE method for high-dimensional fractional backward stochastic differential equations with Wick-Itô integrals}


\author[label1]{Chunhao Cai} 
\ead{caichh9@mail.sysu.edu.cn}

\author[label1]{Cong Zhang\corref{cor1}}
\ead{congzhang402@gmail.com}

\cortext[cor1]{Corresponding author}

\address[label1]{School of Mathematics (Zhuhai), Sun Yat-sen University, Zhuhai 519082, Guangdong, People's Republic of China}

\begin{abstract}
Fractional Brownian motions(fBMs) are not semimartingales so the classical theory of Itô integral can't apply to fBMs. Wick integration as one of the 
applications of Malliavin calculus to stochastic analysis is a fine definition for fBMs. We consider the fractional forward backward stochastic differential equations(fFBSDEs) 
driven by a fBM that have the Hurst parameter in $(\frac{1}{2}, 1)$ where $\int_{0}^{t} f_s \, dB_s^H$ is in the sense of a Wick integral, and relate our fFBSDEs to the system
of partial differential equations by using an analogue of the Itô formula for Wick integrals. And we develop a deep learning algorithm referred to as 
the RNN-BSDE method based on recurrent neural networks which is exactly designed for solving high-dimensional fractional BSDEs and their corresponding partial differential equations.
\end{abstract}



\begin{keyword}
Fractional Brownian motions\sep Wick calculus \sep Fractional backward stochastic differential equations \sep Deep learning \sep  Recurrent neural networks 


\end{keyword}

\end{frontmatter}



\section{Introduction}
\label{sec1}

In recent years, deep learning has been developed and has been widely used to deal with high-dimensional problems about Partial Differential
Equations(PDEs), though the key points of the idea of deep learning which include convolutional neural networks\cite{21701}
and back propagation\cite{1986Learning} were already developed by 1990. The Recurrent Neural Network algorithm, which is classical and basic for dealing with time series, 
was already developed by 2000\cite{ELMAN1990179}\cite{JORDAN1997471}. In the 21st century, the advanced hardware and datasets rather than 
theories are mainly the reasons that deep learning takes off.

We have already known some work on solving PDEs by deep learning algorithms like PINNs\cite{raissi2019physics} and neural operator\cite{kovachki2023neural}. In this paper, we'd like to 
discuss more on solving SDEs by deep learning, which may be rare and have less work on it. The late 2010s and 2020s have witnessed some deep learning algorithms to solve 
stochastic control problems\cite{2016Deep}\cite{ji2022solving} and solve the backward stochastic differential equations(BSDEs)\cite{2017Deep}. Since the connection between nonlinear 
PDEs and BSDEs has been proved\cite{10.1007/BFb0007334}, Solving BSDEs means we can also solve the corresponding PDEs by using the same algorithm.

These work has inspired us whether it will work if we try to solve fractional backward stochastic differential equations(fBSDEs) by the deep learning method 
since it has been proved to be effective for BSDEs. In our work, we consider the following fractional forward backward stochastic differential equations(fFBSDEs)
\begin{equation}\label{eqn1}
\begin{cases}
dX_s = \mu(s,X_s)ds + \sigma (s,X_s)dB_s^H,\\
-dY_s = f(s,X_s,Y_s,Z_s) ds - Z_s dB_s^H,\\
X_t = x,\\
Y_T = g(X_T),
\end{cases} 
\end{equation}
where $B_s^H$ is a fractional Brownian motion(fBM), let Hurst constant $H \in (\frac{1}{2}, 1)$, $t$ and $T$ mean the start and final time, $\left\{(X_s, Y_s, Z_s), t \le s \le T\right\} $ are all stochastic processes.

Mandelbrot and van Ness defined a fractional Brownian Motion as that
\begin{definition}[fBM\cite{fbmdef}]\label{def1.1}
Let $B_0$ an arbitrary real number. We call $B_t^H$ a fractional Brownian Motion with Hurst parameter H and starting value $B_0$ at time 0, such as
\begin{enumerate}
\item $B^H(0,\omega)= B_0$,
\item $B^H(t,\omega)-B^H(0,\omega)= \frac{1}{\Gamma(H+\frac{1}{2})}\int_{0}^{t}(t-s)^{H-\frac{1}{2}}dB(s,\omega)$.
\end{enumerate}
\end{definition}

Since the fractional Brownian motions are not semimartingales, $\int_{}^{}f \, dB^H$ is not well-defined in the sense of Itô integral\cite{le2016brownian}. In this paper, we instead consider it as the Wick integral\cite{ksendal1996ANIT}\cite{2000Duncan}\cite{HU2011}\cite{mishura2008stochastic}. To introduce it, the Wick product is used. Moreover, the stochastic calculus including the Wick integration and differentiation which we will introduce later is based on the Malliavin calculus\cite{ksendal1996ANIT}.

\begin{sloppypar}
Our purpose is to approximate the $\mathscr{F}_t^H-$adapted processes $\left\{(Y_s, Z_s), t \le s \le T\right\}$(where $\mathscr{F}_t^H$ is the nature filtration of$B^H$) such that
\end{sloppypar}
\begin{equation}\label{eqn3}
Y_t =  g(X_T) + \int_{t}^{T} f(s,X_s,Y_s,Z_s)\, ds - \int_{t}^{T} Z_s\, dB_s^H,\quad a.s.
\end{equation} 
by using the deep learning method.

Our paper is organized as follows: In Sec.\ref{sect2}, we summarize some results from the applications of Malliavin calculus to stochastic analysis that we need for our work. In Sec.\ref{sect3}, we study existence and uniqueness of the solution of equation \eqref{eqn3} and consider the relationship between fFBSDEs and nonlinear PDEs, 
also we present some calculations which are useful for the numerical experiments in Sec.\ref{sect5}. In Sec.\ref{sect4}, we introduce our deep learning algorithm for solving fFBSDEs which is based on 
the Recurrent Neural Network(especially, the LSTM can be also a choice for our method as a kind of RNNs), and we refer to this algorithm as RNN-BSDE method. In Sec.\ref{sect5}, we give some numerical experiments of solving some
parabolic PDEs by RNN-BSDE method including mRNN-BSDE based on a stacked-RNN and LSTM-BSDE based on a multi-layer LSTM and compare our method with other methods that are mainly designed for solving BSDEs rather than fractional BSDEs.

In this paper, any $\int_{}^{}f_s\,dB_s^H$ is in the fractional Wick Itô Skorohod integral(fWIS) sense.
\color{black}
\section{Wick calculus}\label{sect2}
Before we introduce the Malliavin calculus applied to the fractional Brownian motion, we would like to intepret why we choose the fWIS integrals rather than 
the pathwise integrals. Afterall, we only consider $\frac{1}{2} \le H <1$. But the fWIS integrals based on Wich calculus has some positive meaning in finance.
The nonexistence of arbitrage in a market is a basic equilibrium condition, however, pathwise fBm market the existence of arbitrage was proved by \cite{pathwise}.
It is not possible to make a sensible mathematical theory for a market with arbitrage. On the other hand, there is no strong arbitrage in the WIS fractional Black
Scholes market which is proved by \cite{biagini_stochastic_2008}. In \cite{HU2011}, the authors has proved their 
Itô fractional Black-Scholes market is complete and computed explicitly the price and replicating portfolio of a European option in this market.
We refer to \cite{biagini_stochastic_2008} to learn more details about the pathwise fBm market and WIS fractional Black-Scholes market. 

\color{black}
We will review some results of the applications of Malliavin calculus to stochastic analysis. For the details one can learn more about stochastic calculus for Brownian motions from \cite{ksendal1996ANIT}\cite{2019Introduction} and the version for fractional Brownian motions from 
\cite{2000Duncan}\cite{HU2011}\cite{mishura2008stochastic}. Firstly we have some preparations.

Set $\frac{1}{2} \le H \le 1$ and fix the Hurst constant. Define
\begin{equation}\label{eqn4}
\phi(s,t) = H(2H-1)\left|s-t\right|^{2H-2}, \quad s,t \in \mathbb{R}.
\end{equation}

Let $f: \mathbb{R} \to \mathbb{R}$ be measurable. We say that $f$ belongs to the Hilbert space $L^2_{\phi}(\mathbb{R})$ if
\begin{equation}\label{eqn5}
{\left|f\right|}_\phi^2 := \int_{\mathbb{R}}^{}\int_{\mathbb{R}}^{} \phi(s,t) f(s) f(t)\, dsdt < \infty.
\end{equation}
The inner product on $L^2_{\phi}(\mathbb{R})$ is denoted by ${(\cdot,\cdot)}_\phi$.

For any $f \in L^2_{\phi}(\mathbb{R})$, define $\varepsilon(f)$ as
\begin{equation}\label{eqn6}
\varepsilon(f) = \exp{(\int_{\mathbb{R}}^{} f \, dB^H-\frac{1}{2}\left\lvert f \right\rvert^2_\phi)}.
\end{equation}
$\varepsilon(f)$ is called an exponential function and let $E$ be the linear span of \\$\left\{\varepsilon(f),f \in L^2_{\phi}(\mathbb{R})\right\}$.

Consider the fractional white noise probability space denoted $(\Omega, \mathscr{F}, P) = (S^\prime(\mathbb{R}), \mathscr{B}, P_\phi)$ where $S^\prime(\mathbb{R})$ is the dual space of 
the Schwartz space $S(\mathbb{R})$ and the probability measure $P_\phi$ exists by using the Bochner-Minlos theorem(see e.g.\cite{spde2010}) such that 
\begin{equation}
\label{eqn2.1e}
\int_{\Omega}^{} e^{i<\omega, f>}\, d P_\phi(\omega) = e^{-\frac{1}{2}{\left|f\right|}_\phi^2 } \qquad for \quad all \quad f \in S(\mathbb{R}),
\end{equation} 
It follows from \eqref{eqn2.1e} that 
\begin{equation*}
E_{P_\phi}\left(\left\langle \cdot,f\right\rangle \right) = 0, \quad E_{P_\phi}\left(\left\langle \cdot,f\right\rangle ^ 2\right) = {\left|f\right|}_\phi^2.   
\end{equation*}
So that $\tilde{B}_t^H$ can be defined as an element of $L^2(P_\phi)$ such that
\begin{equation*}
\tilde{B}(t,\omega) = <\omega, \chi_{\left[0, t\right]}\left(\cdot\right)>,   
\end{equation*}
then by Kolmogorov's continuity theorem, $\tilde{B}_t^H$ has a continuous version $B_t^H$. From now on, we endow $\Omega$ with the natural
filtration $\mathscr{F}_t^H$ of $B_t^H$.

Then what we would introduce is the Hermite polynomials which are
\begin{equation}
\label{eqn2.2e}
h_n(x) = (-1)^ne^{x^2/2}\frac{d^n}{dx^n}\left(e^{-x^2/2}\right),\quad n=0,1,2,\dots
\end{equation}

Set 
\begin{equation}
e_n(u) = \Gamma_\phi^{-1}\left(\tilde{h}_n\right)(u) \quad u \in \mathbb{R},    
\end{equation}
where $\tilde{h}_n(x)$ is the Hermite functions that are defined as
\begin{equation*}
\tilde{h}_n(x) = \pi^{-1/4}\left(\left(n-1\right)! \right)^{-1/2} h_{n-1}(\sqrt{2}x)e^{-x^2/2},  
\end{equation*}
and 
\begin{equation*}
\begin{aligned}
\Gamma_\phi f(u) = c_H \int_{u}^{\infty}(t-u)^{H-3/2}f(t)\,dt,\\
c_H = \sqrt{\frac{H(2H-1)\Gamma(3/2-H)}{\Gamma(H-1/2)\Gamma(2-2H)}},
\end{aligned}    
\end{equation*}
where $\Gamma$ denotes the gamma function.
In view of Lemma 3.1 of \cite{HU2011}, 
$\{e_i\}_{i=1}^{\infty}$ is an orthonormal basis of $L^2_{\phi}(\mathbb{R})$ such that for any $t\in\mathbb{R}$ there exists $C_t < \infty$ such that
\begin{equation*}
\left\lvert \int_{\mathbb{R}}^{}e_n(s)\phi(s, t)\,ds\right\rvert < C_tn^{1/6}.
\end{equation*}

Let $\mathcal{I} = (\mathbb{N}_0^\mathbb{N})_c$ denote the set of all (finite) multi-indices $\alpha = (\alpha_1, \alpha_2, \dots, \alpha_m)$ of non-negative
integrals. Then 
\begin{equation*}
\mathcal{H}_\alpha(\omega):= h_{\alpha_1}\left(\left\langle \omega, e_1\right\rangle \right)\cdots h_{\alpha_m}\left(\left\langle \omega, e_m\right\rangle \right).
\end{equation*}
Finally, we can introduce the fractional Wiener-Itô chaos expansion theorem\cite{2000Duncan} after these preparations,
\begin{theorem}[The fractional Wiener-Itô chaos expansion]
\label{fwie}    
Let $F \in L^2(P_\phi)$. Then there exist constants $c_\alpha \in \mathbb{R}$, $\alpha \in \mathcal{I}$, such that
\begin{equation}
\label{eqn2.3e}
F(\omega) = \sum_{\alpha \in \mathcal{I}}c_\alpha \mathcal{H}_\alpha(\omega)\quad(convergence \quad in \quad L^2(P_\phi)).
\end{equation}
Moreover,
\begin{equation}
\label{eqn2.4e} 
\left\lVert F\right\rVert^2_{L^2(P_\phi)} = \sum_{\alpha \in \mathcal{I}}\alpha!c_\alpha^2,
\end{equation}
where $\alpha! = \alpha_1!\alpha_2!\cdots\alpha_m!$ if $\alpha = (\alpha_1, \dots, \alpha_m) \in \mathcal{I}$.
\end{theorem}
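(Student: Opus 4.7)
The plan is to reduce this infinite-dimensional chaos expansion to the classical Hermite expansion on $\mathbb{R}$ by exploiting the orthonormal basis $\{e_i\}$ of $L^2_\phi(\mathbb{R})$ and then bootstrapping from finite-dimensional expansions via a density argument using the exponential functionals $\varepsilon(f)$.

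First I would record the key probabilistic input: because $\{e_i\}$ is orthonormal in $L^2_\phi(\mathbb{R})$, the identity \eqref{eqn2.1e} shows that the sequence $\xi_i(\omega) := \langle\omega,e_i\rangle$ is a family of i.i.d.\ standard Gaussian random variables under $P_\phi$, so the law of $(\xi_1,\dots,\xi_m)$ is the standard Gaussian on $\mathbb{R}^m$. Combined with the classical one-dimensional fact that the Hermite polynomials $\{h_n\}$ from \eqref{eqn2.2e} are orthogonal in $L^2(\mathbb{R},\gamma)$ with $\int h_n^2\,d\gamma = n!$ and form a complete system there, tensorization gives that the finite products
\begin{equation*}
\mathcal{H}_\alpha(\omega)=h_{\alpha_1}(\xi_1)\cdots h_{\alpha_m}(\xi_m),\qquad \alpha\in\mathbb{N}_0^m,
\end{equation*}
form an orthogonal basis of $L^2(\sigma(\xi_1,\dots,\xi_m),P_\phi)$ with $\|\mathcal{H}_\alpha\|^2=\alpha!$. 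Letting $m\to\infty$ and using the martingale convergence theorem on the filtration $\mathcal{G}_m:=\sigma(\xi_1,\dots,\xi_m)$ yields orthogonality of $\{\mathcal{H}_\alpha:\alpha\in\mathcal{I}\}$ in $L^2(P_\phi)$, together with the norm identity \eqref{eqn2.4e} once completeness is established.

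Next I would define the closed subspace $\mathcal{K}\subset L^2(P_\phi)$ spanned by $\{\mathcal{H}_\alpha\}_{\alpha\in\mathcal{I}}$ and project any $F\in L^2(P_\phi)$ onto $\mathcal{K}$, setting
\begin{equation*}
c_\alpha := \tfrac{1}{\alpha!}\,E_{P_\phi}[F\,\mathcal{H}_\alpha].
\end{equation*}
Defining $\widetilde F := \sum_{\alpha\in\mathcal{I}}c_\alpha \mathcal{H}_\alpha$, Bessel's inequality guarantees convergence in $L^2(P_\phi)$, and it remains to show $F=\widetilde F$ a.s., i.e.\ $\mathcal{K}=L^2(P_\phi)$. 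For this I would use the totality of the exponential functionals $\varepsilon(f)$ of \eqref{eqn6}: a standard argument (the $\varepsilon(f)$ separate points on $\Omega$ and are closed under multiplication up to constants, and the Gaussian MGF together with a Hahn-Banach / monotone class argument) shows that $\mathrm{span}\{\varepsilon(f):f\in L^2_\phi(\mathbb{R})\}$ is dense in $L^2(P_\phi)$. Therefore it suffices to expand each $\varepsilon(f)$ in the $\mathcal{H}_\alpha$.

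For that final step I would expand $f=\sum_i a_i e_i$ in the basis, write $\int_\mathbb{R} f\,dB^H = \sum_i a_i \xi_i$ and $|f|_\phi^2=\sum_i a_i^2$, and then use the one-dimensional generating identity $\exp(ax-\tfrac12 a^2)=\sum_{n=0}^\infty \tfrac{a^n}{n!}h_n(x)$ in each coordinate to obtain
\begin{equation*}
\varepsilon(f)=\prod_{i}\exp\!\bigl(a_i\xi_i-\tfrac12 a_i^2\bigr)=\sum_{\alpha\in\mathcal{I}}\frac{a^\alpha}{\alpha!}\,\mathcal{H}_\alpha(\omega),\qquad a^\alpha:=\prod_i a_i^{\alpha_i},
\end{equation*}
with convergence in $L^2(P_\phi)$ because the partial products form an $L^2$-bounded martingale that converges by the three-series-type argument (the sum $\sum_i a_i^2$ is finite). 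This places every $\varepsilon(f)\in\mathcal{K}$, and by density $\mathcal{K}=L^2(P_\phi)$, giving both \eqref{eqn2.3e} and, via Parseval on the orthogonal system, \eqref{eqn2.4e}. The main obstacle I expect is the density of $\mathrm{span}\{\varepsilon(f)\}$ in $L^2(P_\phi)$: one has to be careful that $P_\phi$ is a genuine Gaussian measure on $(S'(\mathbb{R}),\mathscr{B})$ with covariance $|\cdot|_\phi^2$ so that the characterization \eqref{eqn2.1e} gives Laplace/Fourier uniqueness for finite-dimensional marginals, which is what licenses the monotone class step.
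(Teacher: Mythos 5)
Your argument is essentially correct, but note that the paper offers no proof of this theorem at all: it is quoted verbatim from \cite{2000Duncan}, so there is no internal proof to compare against. What you have written is the standard white-noise/chaos-expansion argument (as in \cite{2000Duncan} and in the SPDE monograph cited as \cite{spde2010}): identify $\xi_i=\langle\cdot,e_i\rangle$ as i.i.d.\ standard Gaussians from the characteristic functional \eqref{eqn2.1e} and the orthonormality of $\{e_i\}$ in $L^2_\phi(\mathbb{R})$, tensorize the one-dimensional Hermite orthogonality to get $E[\mathcal{H}_\alpha\mathcal{H}_\beta]=\alpha!\,\delta_{\alpha\beta}$, expand the exponential functionals $\varepsilon(f)$ via the generating identity $\exp(ax-\tfrac12a^2)=\sum_n \tfrac{a^n}{n!}h_n(x)$, and conclude by density of $\mathrm{span}\{\varepsilon(f)\}$ plus Parseval. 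Two technical points deserve explicit care. First, $e_n=\Gamma_\phi^{-1}(\tilde h_n)$ need not lie in $S(\mathbb{R})$, while \eqref{eqn2.1e} is stated only for $f\in S(\mathbb{R})$; you must extend $f\mapsto\langle\cdot,f\rangle$ from $S(\mathbb{R})$ to $L^2_\phi(\mathbb{R})$ by the isometry into $L^2(P_\phi)$ (using density of $S(\mathbb{R})$ in $L^2_\phi(\mathbb{R})$) before declaring the $\xi_i$ Gaussian and before writing $\int_{\mathbb{R}}f\,dB^H=\sum_i a_i\xi_i$. Second, the density of the exponential functionals, which you correctly flag as the main obstacle, is precisely Theorem 3.1 of \cite{2000Duncan} (also quoted in Section 2 of this paper), so you may either cite it or carry out the analytic-continuation/monotone-class argument you sketch; either way the completeness step is sound, and the norm identity \eqref{eqn2.4e} then follows from Parseval exactly as you say.
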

Then it's ready to define the fractional Hida test function and distribution spaces:
\begin{definition}[\cite{HU2011}]
(a)(The fractional Hida test function spaces)Define $(S)_H$ to be the set of all    
\begin{equation}
\begin{aligned}
\psi(\omega) &= \sum_{\alpha \in \mathcal{I}}a_\alpha\mathcal{H}_\alpha(\omega)\in L^2(P_\phi)\quad such \quad that\\
\left\lVert \psi \right\rVert^2_{H, k}&:=\sum_{\alpha \in \mathcal{I}}\alpha!a_\alpha^2(2\mathbb{N})^{k\alpha} < \infty \quad for\quad all \quad k\in\mathbb{N},
\end{aligned}    
\end{equation}
where 
\begin{equation*}
(2\mathbb{N})^\gamma=\prod_j (2j)^{\gamma_j}\qquad if \quad \gamma = (\gamma_1, \gamma_2, \dots, \gamma_m)\in \mathcal{I}.
\end{equation*}
(b)(The fractional Hida distribution spaces) Define $(S)^*_H$ to be the set of all
formal expansions
\begin{equation}
G(\omega) = \sum_{\beta \in \mathcal{I}}b_\beta\mathcal{H}_\beta(\omega) 
\end{equation}
such that
\begin{equation*}
\left\lVert G \right\rVert^2_{H, -q}:=\sum_{\beta \in \mathcal{I}}\beta!b_\beta^2(2\mathbb{N})^{-q\beta} < \infty \quad for\quad some \quad q\in\mathbb{N}. 
\end{equation*}
\end{definition}
Equip $(S)_H$ with the projective topology and $(S)^*_H$ with the inductive topology, then $(S)^*_H$ can be identified with the dual of $(S)_H$ and the action 
of $G\in(S)^*_H$ on $\psi \in (S)_H$ is given by 
\begin{equation}
\langle\langle G, \psi \rangle\rangle := \left\langle G, \psi\right\rangle _{(S)_H, (S)^*_H} := \sum_{\alpha \in \mathcal{I}}\alpha! a_\alpha b_\alpha.
\end{equation}

\begin{definition}[Wick product in $(S)^*_H$]
Let 
\begin{equation}
F(\omega) =  \sum_{\alpha \in \mathcal{I}}a_\alpha\mathcal{H}_\alpha(\omega) \quad and \quad G(\omega) =  \sum_{\beta \in \mathcal{I}}b_\beta\mathcal{H}_\beta(\omega)    
\end{equation}
be two members of $(S)^*_H$. Then we define the Wick product F $\diamond$ G of F and G by
\begin{equation}
\left(F \diamond G\right)(\omega) = \sum_{\alpha, \beta \in \mathcal{I}}a_\alpha b_\beta \mathcal{H}_{\alpha+\beta}(\omega) = \sum_{\gamma \in \mathcal{I}}\left(\sum_{\alpha+\beta=\gamma}\right)\mathcal{H}_\gamma(\omega).
\end{equation}
\end{definition}

Immediatly, we have the fractional white noise $W_H(t) \in (S)^*_H$ at time $t$ is defined by
\begin{equation}
W_H(t) = \sum_{i=1}^{\infty}\left[\int_{\mathbb{R}}e_i(v)\phi(t,v)\,dv\right]\mathcal{H}_{\epsilon^{(i)}}(\omega),     
\end{equation}
where $\epsilon^{(i)}:=(0,\dots,0,1,0,\dots,0)$ denotes the $i$th unit vector.
And we have
\begin{equation}
\int_{0}^{t}W_H(s)\,ds = B_{H}(t).
\end{equation}

Then, 
\begin{definition}[Fractional Wick Itô Skorohod integrals]
Suppose $Y:\mathbb{R} \to (S)^*_H$ is a given function such that 
$Y(t)\diamond W_H(t)$ is integrable in $(S)^*_H$. Then we define $Y$ is $dB^{(H)}-$integrable and the integral is 
\begin{equation}
\int_{\mathbb{R}}^{}Y(t)\,dB^H(t) := \int_{\mathbb{R}}^{}Y(t)\diamond W_H(t)\,dt. 
\end{equation}
\end{definition}

\subsection{Stochastic derivative}
\begin{definition}
Let $F:S^\prime(\mathbb{R})\to \mathbb{R}$ be a given function and let $\gamma\in S^\prime(\mathbb{R})$. We say that $F$ has a directional derivative in the direction $\gamma$ if
\begin{equation}
D^{(H)}_\gamma F(\omega) := \lim_{\epsilon \to 0}\frac{F(\omega+\epsilon \gamma)-F(\omega)}{\epsilon}    
\end{equation}
exists in $(S)^*_H$. If this is the case, we call $D^{(H)}_\gamma F$ 
the directional deviative of $F$ in the direction $\gamma$.
\end{definition}

\begin{definition}[Malliavin derivative]
We say that $F:S^\prime(\mathbb{R})\to \mathbb{R}$ is differentiable if there exists a map $\Psi:\mathbb{R}\to (S)^*_H$ such that
\begin{equation}
\Psi(t)\gamma(t) = \Psi(t,\omega)\gamma(t)\quad is \quad (S)^*_H-integrable   
\end{equation}
and
\begin{equation}
D^{(H)}_\gamma F(\omega) = \int_{\mathbb{R}}^{}\Psi(t,\omega)\gamma(t)\,dt
\end{equation}
for all $\gamma\in L^2(\mathbb{R})$.
Especially, we put
\begin{equation}
D^{(H)}_t F(\omega) := \frac{dF}{d\omega}(t,\omega):= \Psi(t,\omega).   
\end{equation}
We call $D^{(H)}_t F(\omega)$ the Malliavin derivative of $F$ at $t$.
\end{definition}

Then we introduce another type of Stochastic derivative.
The definition of $\phi-$derivative that its derivative operator is denoted as $D^\phi_t$ can be found in \cite{ksendal1996ANIT}\cite{2000Duncan}. 
For $g \in L^2_{\phi}(\mathbb{R})$, $\Phi$ is defined by
\begin{equation*}
(\Phi_g)_t = \int_{0}^{\infty} \phi(t,u) g_u \, du
\end{equation*}
Let $D_{\Phi_g}$(defined in \cite{2000Duncan}) an analogue of the directional derivative and $D^\phi_t$ is defined as
\begin{equation*}
D_{\Phi_g} F(\omega) = \int_{0}^{\infty} D^\phi_s F(\omega) g(s) \, ds,
\end{equation*}
For all $g \in L^2_{\phi}(\mathbb{R})$, where $F$ is a random variable and $F \in L^p$. Then $F$ is said to be $\phi-$differentiable.
Moreover, for a stochastic process $F$, the process is said to be $\phi-$differentiable if for each $t\in[0,T]$, $F(t,\cdot)$ is $\phi-$differentiable
and $D^\phi_s F_t$ is jointly measurable.

It may confuse readers since the $\phi-$derivative and Malliavin derivative are so similar. In this paper, they will work in different ways.
$\phi-$derivative $D^\phi_t F$ is used to help constructing fractional Wick Itô Skorohod integrals with value in $L^2$(in fact, it is 
also a kind of Malliavin derivative). The Malliavin derivative which is $D^{(H)}_t F(\omega)$ is used to prove the uniqueness and existence
of the solution of the fractional BSDE.

So for our purpose, finally, we introduce two very useful lemmas,
\begin{lemma}[\cite{Biagini}]
\label{lem0}
Suppose $Y:\mathbb{R} \to (S)^*_H$ is $dB^{(H)}-$integrable. Then
\begin{equation}
D_t^{(H)}\left(\int_{\mathbb{R}}^{}Y(s)\,dB^{(H)}(s) \right) = \int_{\mathbb{R}}^{}D_t^{(H)}Y(s)\,dB^{(H)}(s) + Y(t). 
\end{equation}
\end{lemma}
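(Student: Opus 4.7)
The plan is to expand $Y$ in the fractional Wiener-Itô chaos and exploit a Leibniz-type rule for the Wick product under the directional derivative, following the approach of \cite{Biagini}.

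First, by Theorem \ref{fwie} applied for each fixed $s$ (with $s$-dependent coefficients), write
\begin{equation*}
Y(s,\omega) = \sum_{\alpha \in \mathcal{I}} c_\alpha(s)\,\mathcal{H}_\alpha(\omega).
\end{equation*}
The $dB^{(H)}$-integrability hypothesis together with the definition $\int Y\,dB^{(H)} = \int Y \diamond W_H\,ds$ and $W_H(s)=\sum_i M_i(s)\mathcal{H}_{\epsilon^{(i)}}$ with $M_i(s)=\int_{\mathbb{R}}e_i(v)\phi(s,v)\,dv$ gives the chaos representation
\begin{equation*}
\int_{\mathbb{R}}Y(s)\,dB^{(H)}(s) = \sum_{\alpha \in \mathcal{I},\, i \ge 1}\left(\int_{\mathbb{R}} c_\alpha(s) M_i(s)\,ds\right) \mathcal{H}_{\alpha+\epsilon^{(i)}}(\omega).
\end{equation*}

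Second, I would establish a Leibniz rule $D_t^{(H)}(F \diamond G) = D_t^{(H)} F \diamond G + F \diamond D_t^{(H)} G$ directly from the definitions: the map $\omega \mapsto \omega+\epsilon\gamma$ is affine and $\diamond$ is bilinear and $\omega$-independent on chaos basis elements. Combined with the elementary computation $D_t^{(H)}\mathcal{H}_{\epsilon^{(i)}}(\omega) = e_i(t)$ (from $\mathcal{H}_{\epsilon^{(i)}}(\omega)=\langle\omega,e_i\rangle$) and the Hermite identity $h'_n = n h_{n-1}$, this yields $D_t^{(H)} \mathcal{H}_\alpha = \sum_i \alpha_i\, e_i(t)\, \mathcal{H}_{\alpha-\epsilon^{(i)}}$, which extends by linearity to chaos expansions.

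Third, apply $D_t^{(H)}$ termwise to the chaos expansion of $\int Y\,dB^{(H)}$. Each $\mathcal{H}_{\alpha+\epsilon^{(i)}}$-term produces two contributions: one where the derivative strips the newly created $\epsilon^{(i)}$, returning $e_i(t)\mathcal{H}_\alpha$, and one where it strips an index already present in $\alpha$. A reindexing identifies the second contribution precisely with $\int_{\mathbb{R}} D_t^{(H)} Y(s)\,dB^{(H)}(s)$. The first contribution equals
\begin{equation*}
\sum_{\alpha \in \mathcal{I}}\left(\sum_{i \ge 1} e_i(t)\int_{\mathbb{R}} c_\alpha(s) M_i(s)\,ds\right) \mathcal{H}_\alpha(\omega),
\end{equation*}
and since $\int M_i(s) c_\alpha(s)\,ds = (c_\alpha,e_i)_\phi$ while $\{e_i\}$ is an orthonormal basis of $L^2_\phi(\mathbb{R})$, the inner sum reconstructs $c_\alpha(t)$, so this piece collapses to $Y(t)$.

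The main obstacle is justifying the termwise differentiation and the interchange of $D_t^{(H)}$, summation, and the $(S)^*_H$-valued integral, since $Y$ is only Hida-distribution valued. I would handle this by pairing the identity against the exponential functions $\varepsilon(f) \in (S)_H$ under the dual bracket $\langle\langle \cdot,\cdot\rangle\rangle$: on these test functions the chaos series converge absolutely, classical Fubini applies in $s$, and the identity reduces to the algebraic computation above. Non-degeneracy of the pairing combined with the density of the exponential span in $(S)_H$ then lifts the identity back to $(S)^*_H$.
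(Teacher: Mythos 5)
The paper does not actually prove Lemma \ref{lem0}: it is imported verbatim from the cited reference, so there is no internal argument to compare yours against. Your chaos-expansion computation is the standard proof of this ``fundamental theorem of calculus'' in the (fractional) white-noise framework and is essentially sound: the identities $D_t^{(H)}\mathcal{H}_\alpha=\sum_i\alpha_i e_i(t)\mathcal{H}_{\alpha-\epsilon^{(i)}}$ and $\int c_\alpha(s)M_i(s)\,ds=(c_\alpha,e_i)_\phi$ are correct, and the coefficient bookkeeping does close up, since $(\alpha+\epsilon^{(i)})_j-\alpha_j=\delta_{ij}$ leaves exactly one copy of $e_i(t)\mathcal{H}_\alpha$ after the term matching $\int D_t^{(H)}Y\,dB^{(H)}$ is removed; your justification via pairing with exponential vectors and injectivity of that pairing (the $S$-transform argument) is the standard rigorous device. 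Three points deserve tightening rather than the loose phrasing you give: (i) be explicit that when the stripped index equals the newly created one the multiplicity is $\alpha_i+1$, of which $\alpha_i$ copies are absorbed into the integral term and one survives as $Y(t)$ --- your ``strips the newly created $\epsilon^{(i)}$'' wording hides this; (ii) the reconstruction $\sum_i(c_\alpha,e_i)_\phi e_i(t)=c_\alpha(t)$ uses that $\{e_i\}$ is an orthonormal basis of $L^2_\phi(\mathbb{R})$, so it requires $c_\alpha\in L^2_\phi(\mathbb{R})$ and holds only for a.e.\ $t$ (in $L^2_\phi$), consistent with the fact that $D_t^{(H)}$ is anyway defined only through its kernel, but this hypothesis is not literally contained in ``$dB^{(H)}$-integrable'' and should be stated as an implicit requirement that both sides of the identity be well defined; (iii) the expansion $Y(s)=\sum_\alpha c_\alpha(s)\mathcal{H}_\alpha$ for an $(S)^*_H$-valued process comes from the definition of $(S)^*_H$, not from Theorem \ref{fwie}, which only covers $L^2(P_\phi)$. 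None of these affects the validity of the approach.
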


The following results of $\phi-$derivative are frequently used in this paper. 
\begin{lemma}[The chain rule]\label{lem1}
Let $f$ be a smooth function and $F:\Omega \to \mathbb{R}$ and $F$ is $\phi-$differentiable, then $f(F)$ is also $\phi-$differentiable and
\begin{equation}\label{eqn8}
D^\phi_tf(F) = f^\prime(F) D^\phi_tF.
\end{equation}
\end{lemma}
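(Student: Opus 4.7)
The plan is to reduce the chain rule for the $\phi$-derivative $D^\phi_t$ to the chain rule for the auxiliary directional derivative $D_{\Phi_g}$. Since $F$ is assumed $\phi$-differentiable, by the defining identity we have, for every test function $g\in L^2_\phi(\mathbb{R})$,
$$D_{\Phi_g}F(\omega)=\int_0^\infty D^\phi_sF(\omega)\,g(s)\,ds.$$
If I can show that the analogous identity holds with $F$ replaced by $f(F)$, together with the natural candidate kernel $f'(F(\omega))D^\phi_sF(\omega)$, then the uniqueness of the kernel in such an integral representation (obtained by letting $g$ range over a dense family of $L^2_\phi(\mathbb{R})$) will force $D^\phi_t f(F)=f'(F)D^\phi_tF$, which is exactly \eqref{eqn8}.

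To produce the identity for $f(F)$, I would first recall that $D_{\Phi_g}$ acts as a directional derivative in the direction $\Phi_g$, so that
$$D_{\Phi_g}f(F)(\omega)=\lim_{\epsilon\to 0}\frac{f(F(\omega+\epsilon\Phi_g))-f(F(\omega))}{\epsilon}.$$
Writing $\Delta_\epsilon F:=F(\omega+\epsilon\Phi_g)-F(\omega)$ and performing a first-order Taylor expansion of the smooth function $f$ around $F(\omega)$, one gets
$$f(F(\omega+\epsilon\Phi_g))-f(F(\omega))=f'(F(\omega))\,\Delta_\epsilon F+R_\epsilon(\omega),$$
with $|R_\epsilon|\le\tfrac12\sup|f''|\,|\Delta_\epsilon F|^2$ on compact sets. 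Dividing by $\epsilon$ and letting $\epsilon\to 0$, the first term tends to $f'(F(\omega))D_{\Phi_g}F(\omega)$ and the remainder term tends to $0$, yielding $D_{\Phi_g}f(F)=f'(F)\,D_{\Phi_g}F$. Substituting the integral representation of $D_{\Phi_g}F$ from the $\phi$-differentiability hypothesis produces
$$D_{\Phi_g}f(F)(\omega)=\int_0^\infty f'(F(\omega))D^\phi_sF(\omega)\,g(s)\,ds,$$
which is the required representation for $f(F)$. Joint measurability of the kernel $(s,\omega)\mapsto f'(F(\omega))D^\phi_sF(\omega)$ is inherited from the joint measurability of $D^\phi_sF$ and the continuity of $f'$.

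The main obstacle I anticipate lies in making the Taylor remainder argument genuinely rigorous: the difference $\Delta_\epsilon F$ is a random variable rather than a pointwise number, so the control of $R_\epsilon/\epsilon$ must be carried out either in $L^p$ or pathwise with a localization argument, using the smoothness assumption on $f$ to secure a local Lipschitz bound for $f'$. In the $L^p$-framework of \cite{2000Duncan,ksendal1996ANIT} this is standard: since $\Delta_\epsilon F/\epsilon$ converges to $D_{\Phi_g}F$ in $L^p$, a uniform bound on $f''$ on the relevant range of $F$ combined with dominated convergence suffices to kill the quadratic remainder. Once this technical step is cleared, the rest of the argument is essentially a bookkeeping exercise and the conclusion follows by uniqueness of the $\phi$-derivative kernel.
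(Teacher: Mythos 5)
The paper itself offers no proof of this lemma: it is imported from the $\phi$-calculus literature (the references \cite{2000Duncan}, \cite{ksendal1996ANIT}), so there is no in-paper argument to compare against. Your route --- prove the derivation property $D_{\Phi_g}f(F)=f'(F)\,D_{\Phi_g}F$ by a first-order Taylor expansion of $f$, insert the integral representation $D_{\Phi_g}F=\int_0^\infty D^\phi_sF\,g(s)\,ds$ coming from the $\phi$-differentiability of $F$, and then identify the kernel $f'(F)D^\phi_\cdot F$ by letting $g$ range over $L^2_\phi(\mathbb{R})$ (indicators of intervals already suffice for a.e.\ uniqueness) --- is exactly the standard argument behind the cited result, so in substance you are reproducing the proof the paper is implicitly relying on. Two points deserve care. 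First, in the Duncan--Hu--Pasik-Duncan framework the shift in the directional derivative is along the path $t\mapsto\int_0^t(\Phi g)(u)\,du$ (since $\omega$ is identified with the fBm trajectory), not along $\Phi_g$ itself; your formula $F(\omega+\epsilon\Phi_g)$ should be read accordingly, and note the paper's own definition only requires the limit to exist in an $L^p$ sense rather than pathwise. Second, your remainder bound invokes ``a uniform bound on $f''$ on the relevant range of $F$,'' but $F$ is in general unbounded, so as stated for merely smooth $f$ the argument needs an honest hypothesis (globally bounded $f''$, or polynomial growth of $f',f''$ together with moment bounds on $F$ and $\Delta_\epsilon F/\epsilon$, or a localization/density argument as in the references); moreover the quadratic remainder estimate naturally gives convergence of the difference quotient of $f(F)$ only in $L^{p/2}$, so higher integrability of $F$ is needed to land in the same $L^p$ class demanded by the definition of $\phi$-differentiability. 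These are gaps of rigor that the paper's loose statement (``$f$ smooth'') shares, not a flaw in the approach; with such a growth or localization hypothesis made explicit, your proof is complete and matches the standard one.
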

Also ,it's not hard to notice the Malliavin derivative $D^{(H)}_t F(\omega)$ has the same chain rule from their definitions, or see \cite{Kunze2013AnIT}.

\subsection{Fractional Wick Itô Skorohod integrals in $L^2$}
Note that the fractional Hida distribution space $(S)^*_H$ is an extension of $L^2(P_\phi)$, i.e. $L^2(P_\phi)\subset (S)^*_H$. 
$E$ is dense in $L^2(P_\phi)$(Theorem 3.1 of \cite{2000Duncan}).

Now, we want to define $\int_{0}^{T}f_t \, dB^H_t$, in which $f_t$ is an element in $L^2(P_\phi)$.
Consider $f \in E$ and an arbitrary partition of $[0,T]$ denoted $\pi$, $\diamond$ is the mark of the Wick product, then for the following Riemann sum:
\begin{equation}
\label{eqn37}
S(f,\pi) = \sum_{i=0}^{n-1}f_{t_i} \diamond (B_{t_{i+1}}^H-B_{t_i}^H),
\end{equation}
is well-defined.
Our goal is to make $S(f,\pi)$ a Cauchy sequence in $L^2(P_\phi)$ with some condtion. Let $\mathcal{L}(0,T)$ be the family of stochastic processes
$F$ on $[0,T]$ with the following properties: if and only if $E \left\lvert f\right\rvert ^2_\phi < \infty$, 
$F$ is $\phi-$differentiable, the trace of $(D^\phi_sF_t, 0\le s \le T, 0\le t \le T)$ exists, $E\int_{0}^{T}\left(D^\phi_sF_s\right)^2\,ds < \infty$,
and for each sequence of partitions $(\pi_n, n\in \mathbb{N})$ such that $\left\lvert \pi_n\right\rvert \to 0$
as $n \to \infty$, the quantities
\begin{equation*}
\sum_{t=0}^{n-1}E\left\{\int_{t_i^{(n)}}^{t_{i+1}^{(n)}}\left\lvert D^\phi_sF_{t_i^{(n)}}^{\pi}-D^\phi_sF_s\right\rvert \,ds\right\}^2     
\end{equation*} 
and
\begin{equation*}
E\left\lvert F^{\pi}-F \right\rvert^2_\phi 
\end{equation*}
tend to 0 as $n \to \infty$, where $\pi_n: 0 = t_0^{(n)} < t_1^{(n)} < \cdots < t_{n-1}^{(n)} < t_n^{(n)}=T$.

\begin{definition}[Fractional Wick Itô Skorohod integrals in $L^2$]\label{def2.1}
Denote $\left\lvert \pi \right\rvert = \max_i(t_{i+1}-t_i)$, define $\int_{0}^{T} f_t \, dB^H_t$ as
\begin{equation}\label{eqn7}
\int_{0}^{T} f_t \, dB^H_t = \lim_{\left\lvert \pi \right\rvert \to 0} S(f,\pi),\quad for ~ any ~ \pi.
\end{equation}
\end{definition}
For $f \in \mathcal{L}(0,T)$ , $S(f,\pi)$ will be a Cauchy sequence in $L^2 (P_\phi)$ so that it has the limit denoted by $\int_{0}^{T} f_s \,dB_s^H $. In view of Theorem 3.9 in \cite{2000Duncan}, for $f \in \mathcal{L}(0,T)$, the limit of \eqref{eqn37} satisfies
\begin{equation}
\label{eqn38}
E \left\lvert \int_{0}^{T} f(t) \, dB^H_t\right\rvert^2 = E\left\{\left(\int_{0}^{T} D^\phi_s f_s\,ds\right)^2 + \left\lvert f\right\rvert _\phi^2  \right\},  
\end{equation}
and
\begin{equation}
E\int_{0}^{T} f_s \,dB_s^H = 0.    
\end{equation}

Then we have some important theorems to introduce.
\begin{sloppypar}
\begin{theorem}[Itô formula for Wick integration]\label{thmito}
Let $X_t =  \xi + \int_{0}^{t} \mu(s,X_s)\, ds+ \int_{0}^{t} \sigma (s,X_s) \,dB_s^H$, $\sigma \in \mathcal{L}(0,T)$ and $E\sup_{0\le s \le T}|\mu_s| < \infty $.
Assume there is an $\alpha>1-H$ such that
\begin{equation*}
E\left\lvert \sigma_u-\sigma_v\right\rvert^2 \le C\left\lvert u-v\right\rvert^{2\alpha},
\end{equation*}
where $\left\lvert u-v\right\rvert \le \delta$ for some $\delta > 0$ and 
\begin{equation*}
\lim_{0\le u, v \le t,\left\lvert u-v\right\rvert \to 0}E\left\lvert D^\phi_u (\sigma_u-\sigma_v)\right\rvert^2=0.
\end{equation*}
Let $f:\mathbb{R_+} \times \mathbb{R} \to \mathbb{R}$, be a function having the first continuous derivative in its first variable and the second continuous derivative in its second variable. Assume that these derivatives are bounded.
Moreover, it is assumed that $E\int_{0}^{T}|\sigma_sD^\phi_sX_s|<\infty$, $\left(\frac{\partial f}{\partial x}(s,X_s)\sigma_s, 0\le s\le T\right)$ are in $\mathcal{L}(0,T)$. Then, for $0 \le t \le T$,
\begin{equation}\label{eqn9}
\begin{aligned}
f(t,X_t) =& f(0,\xi) + \int_{0}^{t} \frac{\partial f}{\partial s}(s,X_s)ds + \int_{0}^{t} \frac{\partial f}{\partial x}(s,X_s)\mu_s ds\\
&+ \int_{0}^{t} \frac{\partial^2 f}{\partial x^2}(s,X_s)\sigma_s D^\phi_s X_s ds\\ 
&+\int_{0}^{t} \frac{\partial f}{\partial x}(s,X_s)\sigma_s dB_s^H \quad a.s.    
\end{aligned}
\end{equation} 
\end{theorem}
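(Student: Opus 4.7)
The plan is to establish~\eqref{eqn9} by discretising $[0,t]$ along a partition $\pi: 0 = t_0 < t_1 < \cdots < t_n = t$, Taylor-expanding $f$ at each node to second order in $x$ and first order in $s$, and then identifying the $L^2(P_\phi)$-limit of each resulting sum. With $\Delta t_i = t_{i+1}-t_i$, $\Delta X_i = X_{t_{i+1}}-X_{t_i}$, and $\bar X_i$ the Taylor intermediate point, the telescoping identity
\begin{equation*}
f(t,X_t)-f(0,\xi)=\sum_{i=0}^{n-1}\bigl[\partial_s f(t_i,X_{t_i})\Delta t_i + \partial_x f(t_i,X_{t_i})\Delta X_i + \tfrac12\partial_{xx}f(t_i,\bar X_i)(\Delta X_i)^2\bigr] + R_\pi
\end{equation*}
is the starting point, and each sum will be treated separately as $|\pi|\to 0$.

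The $\partial_s f$ sum converges to $\int_0^t \partial_s f(s,X_s)\,ds$ by bounded convergence, and the drift piece of $\sum_i\partial_x f(t_i,X_{t_i})\Delta X_i$ obtained from splitting $\Delta X_i = \int_{t_i}^{t_{i+1}}\mu_s\,ds + \int_{t_i}^{t_{i+1}}\sigma_s\,dB_s^H$ converges to $\int_0^t \partial_x f(s,X_s)\mu_s\,ds$. The delicate first-order contribution is the Wick-integral piece $\sum_i \partial_x f(t_i,X_{t_i})\cdot\int_{t_i}^{t_{i+1}}\sigma_s\,dB_s^H$, in which the outer product is taken pointwise and must be converted into a Wick product. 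Using the standard relation for $F \in (S)^*_H$, $F\cdot \int_a^b \eta_s\,dB_s^H = F\diamond \int_a^b \eta_s\,dB_s^H + \int_a^b \eta_s\,D^\phi_s F\,ds$, together with the chain rule of Lemma~\ref{lem1} giving $D^\phi_s\partial_x f(t_i,X_{t_i}) = \partial_{xx} f(t_i,X_{t_i})\,D^\phi_s X_{t_i}$, the sum decomposes as
\begin{equation*}
\sum_i \partial_x f(t_i,X_{t_i})\diamond \int_{t_i}^{t_{i+1}}\sigma_s\,dB_s^H \;+\; \sum_i \partial_{xx} f(t_i,X_{t_i})\int_{t_i}^{t_{i+1}}\sigma_s D^\phi_s X_{t_i}\,ds.
\end{equation*}
The first is the Riemann sum defining $\int_0^t \partial_x f(s,X_s)\sigma_s\,dB_s^H$ by Definition~\ref{def2.1}, and it converges in $L^2(P_\phi)$ because $\partial_x f(\cdot,X_\cdot)\sigma_\cdot \in \mathcal{L}(0,T)$ by hypothesis; the second converges to $\int_0^t \partial_{xx}f(s,X_s)\sigma_s D^\phi_s X_s\,ds$ as an ordinary Riemann sum, the hypothesis $\lim E|D^\phi_u(\sigma_u-\sigma_v)|^2=0$ providing the uniform continuity of the integrand in $L^2$.

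The main obstacle is showing that the quadratic Taylor sum $\tfrac12\sum_i\partial_{xx}f(t_i,\bar X_i)(\Delta X_i)^2$ and the Taylor remainder $R_\pi$ both vanish in $L^2(P_\phi)$, so that, unlike the classical It\^o formula, the quadratic-variation term is absent and the only second-order contribution is the Wick-product correction isolated above. Since $\partial_{xx}f$ is bounded, it suffices to show $E\sum_i(\Delta X_i)^2\to 0$. The drift part is $O(|\pi|)$ by Cauchy--Schwarz, and for the fWIS part, \eqref{eqn38} bounds $E(\int_{t_i}^{t_{i+1}}\sigma_s\,dB^H_s)^2$ by the sum of an iterated $\phi$-norm of $\sigma$ on $[t_i,t_{i+1}]$ and a contribution from $D^\phi\sigma$; the former is of order $(\Delta t_i)^{2H}$, so summing over $i$ yields $O(|\pi|^{2H-1})\to 0$ because $H>1/2$. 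This is where the restriction $H>1/2$ is decisive: it is precisely the vanishing of the ``quadratic variation'' of $B^H$ in this regime that explains why no $\sigma^2$ term appears. The H\"older condition $E|\sigma_u-\sigma_v|^2\le C|u-v|^{2\alpha}$ with $\alpha>1-H$ supplies the Wick-integral continuity needed to replace $\sigma_s$ by $\sigma_{t_i}$ on each subinterval in all of the above, and the boundedness of $\partial_{xx}f$ together with its continuity absorbs the Taylor remainder $R_\pi$ against the same quadratic estimate.
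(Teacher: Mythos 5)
You should note at the outset that the paper itself contains no proof of Theorem~\ref{thmito}: it is imported from the fractional Wick-calculus literature (see \cite{2000Duncan}, \cite{HU2011}, \cite{mishura2008stochastic}, \cite{biagini_stochastic_2008}), so there is no in-paper argument to compare against. Your sketch follows essentially the same route as the proofs in those references: partition and Taylor expansion, identification of the fWIS integral as the $L^2(P_\phi)$-limit of Wick--Riemann sums via Definition~\ref{def2.1}, the chain rule of Lemma~\ref{lem1}, and --- correctly singled out as the structural point --- the fact that for $H>\tfrac12$ the quadratic Taylor sum vanishes (your estimate via \eqref{eqn38}, with the $|\cdot|_\phi$-part of order $(\Delta t_i)^{2H}$ summing to $O(|\pi|^{2H-1})$ and the $D^\phi$-part of order $(\Delta t_i)^2$, is the right one), so that the $\partial_{xx}f$ term in \eqref{eqn9} arises entirely from the Wick-versus-ordinary product correction and not from a quadratic variation.

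The one step that is genuinely misstated is the ``standard relation'' you pivot on. For a random integrand $\eta$ the identity $F\cdot\int_a^b\eta_s\,dB^H_s=F\diamond\int_a^b\eta_s\,dB^H_s+\int_a^b\eta_s\,D^\phi_sF\,ds$ is false: the correct factor-out (divergence) identity keeps the \emph{ordinary} product inside the integral, $F\cdot\int_a^b\eta_s\,dB^H_s=\int_a^b F\,\eta_s\,dB^H_s+\int_a^b\eta_s\,D^\phi_sF\,ds$, whereas $F\diamond\int_a^b\eta_s\,dB^H_s=\int_a^b(F\diamond\eta_s)\,dB^H_s$ differs from this by further Malliavin-derivative terms (the discrepancy does not disappear even after freezing $\sigma_s=\sigma_{t_i}$, since $(F\sigma_{t_i})\diamond\Delta B^H_{t_i}\neq(F\diamond\sigma_{t_i})\diamond\Delta B^H_{t_i}$ in general). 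As written, your first limit term is therefore not the Riemann sum of Definition~\ref{def2.1} for $\int_0^t\partial_xf(s,X_s)\sigma_s\,dB^H_s$, whose integrand is the ordinary product process $\partial_xf(\cdot,X_\cdot)\sigma_\cdot$. The repair is direct: apply the correct factor-out identity (equivalently, the exact first-chaos relation $G\diamond\Delta B^H_{t_i}=G\,\Delta B^H_{t_i}-\int_{t_i}^{t_{i+1}}D^\phi_sG\,ds$ after the $\sigma_{t_i}$-freezing your H\"older hypotheses control), which yields $\sum_i\int_{t_i}^{t_{i+1}}\partial_xf(t_i,X_{t_i})\sigma_s\,dB^H_s$ plus $\sum_i\partial_{xx}f(t_i,X_{t_i})\int_{t_i}^{t_{i+1}}\sigma_s D^\phi_sX_{t_i}\,ds$, and then your convergence arguments (membership of $\partial_xf(\cdot,X_\cdot)\sigma_\cdot$ in $\mathcal{L}(0,T)$ for the first sum; the condition on $D^\phi_u(\sigma_u-\sigma_v)$ and the trace/continuity requirements built into $\mathcal{L}(0,T)$ for the diagonal limit $D^\phi_sX_{t_i}\to D^\phi_sX_s$ in the second --- note you correctly use that $D^\phi_sX_{t_i}\neq0$ for $s>t_i$, unlike the plain Malliavin derivative) go through and recover \eqref{eqn9}. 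With that correction your outline matches the standard proof.
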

\end{sloppypar}

if as $n \to \infty$, $\sum_{i = 0}^{n-1}f(t^{(n)}_i)(B^H(t^{(n)}_{i+1})-B^H(t^{(n)}_{i}))$ converges in $L^2(\Omega, \mathscr{F}, P)$ to the same limit
for all partitions $(\pi_n, n \in N)$ satisfying $\left\lvert \pi_n \right\rvert \to 0$ as $n \to \infty$, then this limit is called the
stochastic integral of Stratonovich type and the limit is denoted by $\int_{0}^{t}f_s \delta B_s^H$.

It can be found in \cite{2000Duncan}\cite{biagini_stochastic_2008} that  
\begin{theorem}
\label{thmtrans}
For $f \in \mathcal{L}(0,T)$, the following equality is satisfied:
\begin{equation}
\int_{0}^{t}f_s \,d B_s^H =\int_{0}^{t}f_s \delta B_s^H - \int_{0}^{t}D^\phi_s f_s\, ds \qquad a.s.
\end{equation}
\end{theorem}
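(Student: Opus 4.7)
The plan is to compare the two integrals at the level of Riemann sums over a fixed partition $\pi_n: 0=t_0^{(n)}<\cdots<t_n^{(n)}=t$ with mesh $|\pi_n|\to 0$, using the pointwise relation between ordinary and Wick products when one factor is a Gaussian increment. Writing $\Delta_i B^H := B^H_{t_{i+1}^{(n)}}-B^H_{t_i^{(n)}}$, the fWIS Riemann sum $S(f,\pi_n)=\sum_i f_{t_i^{(n)}}\diamond \Delta_i B^H$ converges in $L^2(P_\phi)$ to $\int_0^t f_s\,dB_s^H$ by Definition~\ref{def2.1}, while the ordinary Riemann sum $\widetilde S(f,\pi_n):=\sum_i f_{t_i^{(n)}}\,\Delta_i B^H$ converges in $L^2(\Omega,\mathscr F,P)$ to $\int_0^t f_s\,\delta B_s^H$ by the definition of the Stratonovich-type integral. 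So it suffices to control the difference term by term and pass to the limit.

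The algebraic heart of the argument is the identity
\begin{equation*}
F\cdot \Bigl(\int_0^T g_s\,dB_s^H\Bigr) \;=\; F\diamond \Bigl(\int_0^T g_s\,dB_s^H\Bigr) \;+\; \int_0^T D_s^\phi F \cdot g_s\,ds,
\end{equation*}
valid for $\phi$-differentiable $F\in L^2(P_\phi)$ and deterministic $g\in L^2_\phi(\mathbb R)$; specialising to $F=f_{t_i^{(n)}}$ and $g=\chi_{[t_i^{(n)},t_{i+1}^{(n)}]}$ gives
\begin{equation*}
f_{t_i^{(n)}}\cdot \Delta_i B^H \;=\; f_{t_i^{(n)}}\diamond \Delta_i B^H \;+\; \int_{t_i^{(n)}}^{t_{i+1}^{(n)}} D_s^\phi f_{t_i^{(n)}}\,ds.
\end{equation*}
Summing over $i$ produces the identity $\widetilde S(f,\pi_n)=S(f,\pi_n)+\sum_i\int_{t_i^{(n)}}^{t_{i+1}^{(n)}} D_s^\phi f_{t_i^{(n)}}\,ds$, which already isolates the correction term appearing in the theorem.

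It remains to show the correction sum converges in $L^2$ to $\int_0^t D_s^\phi f_s\,ds$. Define the step process $f^{\pi_n}_s:=f_{t_i^{(n)}}$ for $s\in[t_i^{(n)},t_{i+1}^{(n)})$; then the correction equals $\int_0^t D_s^\phi f^{\pi_n}_s\,ds$, and by linearity of $D_s^\phi$ (Lemma~\ref{lem1}) we have
\begin{equation*}
\int_0^t D_s^\phi f^{\pi_n}_s\,ds - \int_0^t D_s^\phi f_s\,ds \;=\; \sum_{i}\int_{t_i^{(n)}}^{t_{i+1}^{(n)}}\bigl(D_s^\phi f_{t_i^{(n)}} - D_s^\phi f_s\bigr)\,ds.
\end{equation*}
By Cauchy-Schwarz and the $\mathcal L(0,T)$ trace-convergence hypothesis, the $L^2$-norm of this remainder is bounded by the square root of $\sum_i E\bigl\{\int_{t_i^{(n)}}^{t_{i+1}^{(n)}}|D_s^\phi f_{t_i^{(n)}}-D_s^\phi f_s|\,ds\bigr\}^2$, which tends to zero by the definition of $\mathcal L(0,T)$. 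Putting the three limits together and rearranging yields $\int_0^t f_s\,dB_s^H=\int_0^t f_s\,\delta B_s^H-\int_0^t D_s^\phi f_s\,ds$ almost surely.

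The main obstacle is the Gaussian product identity in the second step: it is an application of Wick calculus (essentially integration by parts against the standard Gaussian kernel in $(S)_H^*$), and one needs to verify that both sides lie in $L^2(P_\phi)$ rather than merely in $(S)^*_H$, which is precisely what the $\phi$-differentiability of $f_{t_i^{(n)}}$ together with $f\in\mathcal L(0,T)$ provides; once this identity is secured, the passage to the limit is routine using the hypotheses built into $\mathcal L(0,T)$.
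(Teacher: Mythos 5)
The paper itself does not prove Theorem \ref{thmtrans}: it is quoted from \cite{2000Duncan} and \cite{biagini_stochastic_2008}. Your proposal reconstructs precisely the argument of that cited source: apply, term by term in the Riemann sums of Definition \ref{def2.1}, the identity $F\cdot\int_0^T g_s\,dB^H_s = F\diamond\int_0^T g_s\,dB^H_s + \int_0^T (D^\phi_s F)\,g(s)\,ds$ with $g=\chi_{[t_i,t_{i+1}]}$, sum over $i$, and pass to the $L^2(P_\phi)$ limit using the defining properties of $\mathcal{L}(0,T)$. The sign bookkeeping is right, and a nice feature of your layout is that convergence of the Wick sums plus convergence of the correction term actually \emph{establishes} the existence of $\int_0^t f_s\,\delta B^H_s$ (which should be phrased as a conclusion rather than invoked "by definition", since the Stratonovich-type integral is only defined when all such sums converge to a common limit).

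The one step that does not hold as written is the final estimate. Setting $X_i:=\int_{t_i}^{t_{i+1}}\bigl(D^\phi_s f_{t_i}-D^\phi_s f_s\bigr)\,ds$ and $Y_i:=\int_{t_i}^{t_{i+1}}\bigl\lvert D^\phi_s f_{t_i}-D^\phi_s f_s\bigr\rvert\,ds\ge 0$, you claim $\bigl\lVert\sum_i X_i\bigr\rVert_{L^2}\le\bigl(\sum_i E\,Y_i^2\bigr)^{1/2}$. That inequality goes the wrong way: for nonnegative summands $E\bigl(\sum_i Y_i\bigr)^2\ge\sum_i E\,Y_i^2$ (take $Y_i\equiv c$ deterministic: the two sides are $n^2c^2$ versus $nc^2$), and Minkowski only gives $\sum_i\bigl(E\,Y_i^2\bigr)^{1/2}$, which can exceed $\bigl(\sum_i E\,Y_i^2\bigr)^{1/2}$ by a factor $\sqrt{n}$. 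So the trace-approximation condition, read literally as $\sum_i E\,Y_i^2\to 0$, does not by itself force the cross terms to vanish. The correct way to close the argument is to bound the remainder by the single quantity $\int_0^t\bigl\lvert D^\phi_s f^{\pi_n}_s-D^\phi_s f_s\bigr\rvert\,ds$ (the step process $f^{\pi_n}$ you already introduced) and use that $E\bigl(\int_0^t\lvert D^\phi_s f^{\pi_n}_s-D^\phi_s f_s\rvert\,ds\bigr)^2\to 0$; this squared-total form of the trace approximation is what the limiting procedure genuinely requires (it is also what is needed to pass to the limit in the isometry \eqref{eqn38} for step processes), and with it your conclusion follows immediately. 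Apart from replacing that Cauchy--Schwarz step, the proof is the standard one and is sound.
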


\section{Uniqueness and existence of fractional backward SDEs and systems of PDEs}\label{sect3}
Consider the fractional white noise space $(\Omega, \mathscr{F}, P) = (S^\prime(\mathbb{R}^d), \mathscr{B}, P_\phi)$(The multidimensional
presentation $d > 1$ will be an analogue of the case in \cite{1992The} or see \cite{Biagini}), $\mathscr{F}_t^H = \sigma(B^H(s), 0 \le s \le t)$ and the fFBSDEs \eqref{eqn1} which we give in Sec.\ref{sec1}:
\begin{equation*}
\begin{cases}
dX_s = \mu(s,X_s)ds + \sigma (s,X_s)dB_s^H,\\
-dY_s = f(s,X_s,Y_s,Z_s) ds - Z_s dB_s^H,\\
X_t = x,\\
Y_T = g(X_T),
\end{cases} 
\end{equation*}
where $H \in (\frac{1}{2}, 1)$, $\mu(s,x): \mathbb{R}^+ \times \mathbb{R}^d \to \mathbb{R}^d$, $\sigma(s,x): \mathbb{R}^+ \times \mathbb{R}^d \to \mathbb{R}^{d \times d}$, and let
$\mu(s,X_s)$, $\sigma (s,X_s)$ satisfy the conditions of Theorem \ref{thmito}. $Y: \mathbb{R}^+ \times \Omega \to \mathbb{R}^k$, $Z: \mathbb{R}^+ \times \Omega \to \mathbb{R}^{k \times d}$,
$g(X_T)$ is $\mathscr{F}_T$-measurable, $f:\mathbb{R}^+ \times \mathbb{R}^d \times \mathbb{R}^k \times \mathbb{R}^{k \times d} \to \mathbb{R}^k$.

The solution of $X_t$ which solves \eqref{eqn1} have been studied by \cite{VaÅge01101996}. Also, as mentioned in \cite{VaÅge01101996}, simultaneous fulfilment of the Lipschitz and growth
conditions on the negative norms of coefficients is very restrictive, \cite{mishura2008stochastic} has discussed the uniqueness and existence of solutions of the
quasilinear equation of the form
\begin{equation*}
dX_s = \mu(s,X_s)ds + \sigma (s)X_sdB_s^H.
\end{equation*}

As for the uniqueness and existence of solution $\{y, z\}$, first of all, we assume
\begin{enumerate}
\item The following fractional SDE,
\begin{equation*}
dX_s = \mu(s,X_s)ds + \sigma (s,X_s)dB_s^H   
\end{equation*}
has a unique solution $X:\mathbb{R} \to \left((S)^*_H\right) ^d$.
\item $X_t, Y_t, Z_t$ are all $\mathscr{F}_t^H-$measurable for any $t \in [0, T]$, $Y$, $Z\in \mathcal{L}(0,T)$.
\end{enumerate}
It's a useful way to study fBSDE by relating fractional Backward SDEs with PDEs. Firstly, we give the following system of PDEs,
\begin{equation}\label{eqn10}
\begin{cases}	
\begin{aligned}
&\frac{\partial u(t,x)}{\partial t} + \mathcal{L}_{frac} u(t,x) + f(t,x,u(t,x),(\nabla u\sigma)(t,x))=0,\\
&u(T,x)=g(x),\\   
\end{aligned} 
\end{cases}  
\end{equation}
where $u: \mathbb{R_+} \times \mathbb{R}^d \to \mathbb{R}^k$ and we denote $u(t,x) = \left(u^1(t,x),\cdots,u^k(t,x)\right)^T$, and 
\begin{equation*}
\mathcal{L}_{frac} = 
\begin{pmatrix}
L_{frac} u_1\\
\cdots\\
L_{frac} u_k
\end{pmatrix},
\end{equation*} 
\begin{equation*}
L_{frac}=\sum_{i,j = 1}^{d}\sigma_i(t,x)D^\phi_t x_j \frac{\partial^2}{\partial x_i \partial x_j}+\sum_{i = 1}^{d}\mu_i(t,x)\frac{\partial}{\partial x_i}.
\end{equation*} 

We have the following theorem that
\begin{theorem}\label{thmpde}
Fix $\frac{1}{2}<H\le 1$, consider $X_t = X_0 + \int_{0}^{t} \mu(s,X_s) \,ds + \int_{0}^{t} \sigma (s,X_s) \,dB_s^H$ and let $\left\{(\mu_s, \sigma_s),0 \le s \le T\right\}$ 
satisfy the conditions of Theorem \ref{thmito}. If $u \in C^{1,2}([0,T] \times \mathbb{R}^d)$ solves equation \eqref{eqn10}, then $u(t,x) = Y^{t,x}_t$(where $X^{t,x}_t = x$), $t \ge 0$, $x \in \mathbb{R}^d$,
where $\left\{(Y_s^{t,x},Z_s^{t,x}), t \le s \le T \right\} $  is the solution of the BSDE \eqref{eqn3}.
\end{theorem}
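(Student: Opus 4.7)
The strategy is the classical nonlinear Feynman--Kac argument, adapted to the Wick--Itô setting via Theorem \ref{thmito}. Fix $(t,x)\in[0,T]\times\mathbb{R}^d$ and let $X^{t,x}_s$ denote the solution of the forward SDE with $X^{t,x}_t=x$. I will define the candidate pair
\begin{equation*}
\widetilde{Y}_s := u(s,X^{t,x}_s),\qquad \widetilde{Z}_s := (\nabla u\,\sigma)(s,X^{t,x}_s),\qquad s\in[t,T],
\end{equation*}
and show that $(\widetilde{Y},\widetilde{Z})$ solves the BSDE \eqref{eqn3}. Because the BSDE \eqref{eqn3} is assumed to have a unique solution, evaluating $\widetilde{Y}_t = u(t,x)$ at the initial time then yields the claimed identity $u(t,x)=Y^{t,x}_t$.

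The core computation is to apply the Wick--Itô formula (Theorem \ref{thmito}, componentwise in the multidimensional version) to $u(s,X^{t,x}_s)$ on the interval $[t,T]$. This produces
\begin{equation*}
\begin{aligned}
u(T,X^{t,x}_T) = u(t,x) &+ \int_t^T \frac{\partial u}{\partial s}(s,X^{t,x}_s)\,ds + \int_t^T \sum_i \mu_i(s,X^{t,x}_s)\frac{\partial u}{\partial x_i}(s,X^{t,x}_s)\,ds \\
&+ \int_t^T \sum_{i,j}\sigma_i(s,X^{t,x}_s)\,D^\phi_s X^{t,x}_j\,\frac{\partial^2 u}{\partial x_i\partial x_j}(s,X^{t,x}_s)\,ds \\
&+ \int_t^T (\nabla u\,\sigma)(s,X^{t,x}_s)\,dB_s^H.
\end{aligned}
\end{equation*}
I will then substitute the PDE \eqref{eqn10}: the sum of the three Lebesgue-integral terms equals $\int_t^T(\partial_s u + L_{frac}u)(s,X^{t,x}_s)\,ds = -\int_t^T f(s,X^{t,x}_s,u,(\nabla u\,\sigma))\,ds$. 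Using the terminal condition $u(T,\cdot)=g$ on the left-hand side and rearranging gives exactly
\begin{equation*}
\widetilde{Y}_s = g(X^{t,x}_T) + \int_s^T f(r,X^{t,x}_r,\widetilde{Y}_r,\widetilde{Z}_r)\,dr - \int_s^T \widetilde{Z}_r\,dB_r^H,
\end{equation*}
which is \eqref{eqn3} after shifting the lower limit from $t$ to a general $s$ by the same argument applied on $[s,T]$.

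The main obstacle is not the algebraic rearrangement but the verification that the hypotheses of Theorem \ref{thmito} are in force for $u(s,X^{t,x}_s)$, and, separately, that $\widetilde{Y},\widetilde{Z}\in\mathcal{L}(0,T)$ as required by the standing assumptions of the section so that the Wick integral $\int\widetilde{Z}\,dB^H$ is meaningful and uniqueness can be invoked. Concretely, one must check: boundedness of the relevant derivatives of $u$ (inherited from $u\in C^{1,2}$ together with, e.g., a localisation argument if the derivatives are only locally bounded); Hölder regularity of $(\nabla u\,\sigma)(s,X^{t,x}_s)$ with an exponent exceeding $1-H$, which follows by the chain rule (Lemma \ref{lem1}) from the regularity of $\sigma$ and $X^{t,x}$ assumed in Theorem \ref{thmito}; $\phi$-differentiability of the integrand and joint measurability of $D^\phi_s\widetilde{Z}_s$, again via the chain rule; and $\mathscr{F}^H_s$-adaptedness of $\widetilde{Y}_s,\widetilde{Z}_s$, which is immediate because $X^{t,x}_s$ is adapted and $u$ is deterministic. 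Once these verifications are recorded, the Wick--Itô expansion above is legitimate, the PDE substitution is algebraic, and the conclusion follows from the assumed uniqueness of the BSDE solution.
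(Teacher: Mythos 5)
Your proposal is correct and follows essentially the same route as the paper: apply the Wick--Itô formula (Theorem \ref{thmito}) to $u(s,X^{t,x}_s)$ on $[t,T]$, substitute the PDE \eqref{eqn10} and the terminal condition $u(T,\cdot)=g$, and identify $\widetilde{Y}_s=u(s,X^{t,x}_s)$, $\widetilde{Z}_s=(\nabla u\,\sigma)(s,X^{t,x}_s)$ with the solution of \eqref{eqn3}. Your additional verification that the hypotheses of Theorem \ref{thmito} hold and that $\widetilde{Y},\widetilde{Z}\in\mathcal{L}(0,T)$ is more explicit than the paper's argument but does not change the method.
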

\begin{proof}
Apply Theorem \ref{thmito} to the solution $u$ of \eqref{eqn10},
\begin{equation*}
\begin{aligned}
u(T,X_T)-u(t,X_t) =& -\int_{t}^{T} \mathcal{L}_{frac} u(s,X_s) + f \,ds\\
&+ \sum_{i = 1}^{d} \int_{t}^{T} \mu_i(s,X_s)  \frac{\partial u(s,X_s)}{\partial x^i} \,ds\\ 
&+ \sum_{i = 1}^{d} \int_{t}^{T} \sigma_i(s,X_s)  \frac{\partial u(s,X_s)}{\partial x^i} \,dB^H_s\\
&+ \sum_{i,j = 1}^{d}\int_{t}^{T} \sigma_i(s,X_s) D^\phi_s X_s^j  \frac{\partial^2 u(s,X)}{\partial x^i \partial x^j} \,ds \quad a.s.
\end{aligned}   
\end{equation*}
\begin{equation}
u(t,X_t) = g(X_T) +\int_{t}^{T}f\,ds - \sum_{i = 1}^{d} \int_{t}^{T} \sigma_i(s,X_s)  \frac{\partial u(s,X_s)}{\partial x^i} \,dB^H_s, \quad a.s.
\end{equation}
it can be seen that this theorem has been proved.
\end{proof}

What remains to us is to prove the converse to the above result, let us introduce a lemmma first,
\begin{lemma}
\label{lemma3.2}    
For any $0 \le s \le T$,
\begin{equation}
Z_s = \nabla Y_s (\nabla X_s)^{-1}\sigma(s,X_s).
\end{equation}
\end{lemma}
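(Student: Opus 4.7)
The plan is to combine the Markovian representation $Y_s = u(s, X_s)$ established through Theorem \ref{thmpde} with the classical chain rule applied to the forward stochastic flow. View the forward component as the flow $X_s = X_s^{t,x}$ parametrized by its initial condition $x$, and interpret $\nabla X_s$ as the Jacobian $\partial X_s / \partial x$, while $\nabla Y_s$ denotes $\partial Y_s / \partial x$. The strategy has three steps.

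First, I would apply the Wick-It\^o formula (Theorem \ref{thmito}) to $u(s, X_s)$ exactly as in the proof of Theorem \ref{thmpde}. Comparing the resulting Wick integral term
\[
\sum_{i=1}^{d}\int_{t}^{T}\sigma_i(s,X_s)\,\frac{\partial u(s,X_s)}{\partial x^i}\,dB_s^H
\]
with the $-\int_t^T Z_s\,dB_s^H$ term in the BSDE \eqref{eqn3}, and invoking the uniqueness of the fWIS representation in $L^2(P_\phi)$, forces the pointwise identification $Z_s = \nabla u(s, X_s)\,\sigma(s, X_s)$.

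Second, I would differentiate the forward SDE with respect to the initial condition $x$ to obtain the first-variation process $\nabla X_s$, which solves the linear fractional SDE
\[
d(\nabla X_s) = \nabla\mu(s,X_s)\,\nabla X_s\,ds + \nabla\sigma(s,X_s)\,\nabla X_s\,dB_s^H,\qquad \nabla X_t = I.
\]
Under the standing Lipschitz/regularity hypotheses carried over from Theorem \ref{thmito}, $\nabla X_s$ is almost surely invertible. Differentiating the identity $Y_s = u(s, X_s^{t,x})$ in $x$ via the chain rule yields $\nabla Y_s = \nabla u(s, X_s)\,\nabla X_s$, so that $\nabla u(s, X_s) = \nabla Y_s\,(\nabla X_s)^{-1}$.

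Third, substituting this expression for $\nabla u(s, X_s)$ into the identity from Step~1 gives the claimed formula $Z_s = \nabla Y_s\,(\nabla X_s)^{-1}\sigma(s, X_s)$.

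The main obstacle is the regularity input to Steps 1 and 2: one needs both the existence of a $C^{1,2}$ solution $u$ to the associated PDE so that $Y_s = u(s, X_s)$, and the almost-sure invertibility and smooth $x$-dependence of the fractional flow $X_s^{t,x}$. In the semimartingale setting these come from Kunita's flow theory and the Pardoux--Peng analysis; in the Wick--It\^o/fBm framework one must lean on the $\phi$-derivative chain rule (Lemma \ref{lem1}) and Lemma \ref{lem0} to push derivatives past the Wick integrals in the linearized SDE, which is the delicate technical step.
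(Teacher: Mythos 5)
Your route is genuinely different from the paper's. The paper never passes through the PDE: it differentiates the BSDE itself in the Malliavin sense, using Lemma \ref{lem0} and the chain rule to obtain a linear equation for $(D^{(H)}_\theta)_i Y_s$, sets $\theta=s$ to get the diagonal identity $D^{(H)}_s Y_s = Z_s$ (equation \eqref{eqn3.4e}), and then proves $D^{(H)}_\theta X_s=\nabla X_s(\nabla X_\theta)^{-1}\sigma(\theta,X_\theta)$ and $D^{(H)}_\theta Y_s=\nabla Y_s(\nabla X_\theta)^{-1}\sigma(\theta,X_\theta)$ by comparing the linear (B)SDEs solved by the Malliavin derivatives with those solved by the variational processes \eqref{eqn3.3e}, \eqref{eqn3.6e} and invoking uniqueness (via \cite{HU2009}); combining these gives \eqref{eqn3.9e}. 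Your second step is close in spirit to that comparison-of-linear-equations argument, but your first step replaces the Malliavin identity $Z_s=D^{(H)}_sY_s$ by the representation $Y_s=u(s,X_s)$ with $u$ a $C^{1,2}$ solution of \eqref{eqn10}, obtained from Theorem \ref{thmpde}.

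That substitution is where the genuine gap lies, for two concrete reasons. First, Lemma \ref{lemma3.2} is used precisely inside the proof of Theorem \ref{thmpder}, whose purpose is the converse implication (from $Y_t=u(t,X_t)$ to $u$ solving \eqref{eqn10}); basing the lemma on Theorem \ref{thmpde} requires assuming in advance that \eqref{eqn10} admits a classical solution, an assumption not among the standing hypotheses under which the lemma is stated, and it renders the converse direction partly circular. The paper's Malliavin argument needs only the fBSDE, differentiability of $f$ and $g$, and uniqueness for the linearized equations, which is exactly why it is the proof chosen here. Second, your claim that ``uniqueness of the fWIS representation'' forces the pointwise identification $Z_s=\nabla u(s,X_s)\sigma(s,X_s)$ is not automatic: equality of two processes each written as a $ds$-integral plus a Wick--It\^o integral does not immediately yield equality of the integrands, and in this framework any such identification must go through the isometry-type identity \eqref{eqn38}, which carries the extra trace term $\int D^\phi_s(\cdot)\,ds$, or through the Malliavin-derivative computation the paper uses. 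To repair your argument you would either have to add the classical-solvability assumption on \eqref{eqn10} and supply the integrand-identification step explicitly, or revert to establishing $D^{(H)}_sY_s=Z_s$ as in the paper.
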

\begin{proof}
Note that for any $\mathscr{F}_t^H-$adapted process $F$ and $s \le t$, the stochastic deviation of $F$ satisfies
\begin{equation*}
D^{(H)}_t F_s = 0,    
\end{equation*}
which has been stated in page 57 of \cite{Nualart_Nualart_2018}(Though there the authors consider the standard Brownian motion, for the FBM, there would be nothing 
different since techniques of Malliavin calculus make classic BM share many properties with the fBM.)

Consider
\begin{equation*}
Y_s = Y_t - \int_{t}^{s}f\left(X_r, Y_r, Z_r\right)\,dr + \int_{t}^{s}Z_r\,dB_r^H, 
\end{equation*}
Let $(Z_s)_i$ denote the $i-$th column ot he matriz. In view of \ref{lem0} and the chain rule for the Malliavin derivative, for any $t < \theta \le s \le T$ and $1 \le i \le d$,
\begin{equation*}
\begin{aligned}
(D^{(H)}_\theta)_i  Y_s = &(Z_\theta)_i - \int_{\theta}^{s} [f_x^{\prime}\left(X_r, Y_r, Z_r\right)(D^{(H)}_\theta)_i X_r + f_y^{\prime}\left(X_r, Y_r, Z_r\right)(D^{(H)}_\theta)_i Y_r \\
&+ f_z^{\prime}\left(X_r, Y_r, Z_r\right)(D^{(H)}_\theta)_i Z_r]\,dr\\
&+ \int_{\theta}^{s}(D^{(H)}_\theta)_i Z_r\,dB_r^H,
\end{aligned}
\end{equation*}
Moreover, set $\theta = s$, then
\begin{equation}
\label{eqn3.4e}
D^{(H)}_s Y_s = Z_s,\quad s \quad a.e.    
\end{equation}

Next, consider the matrix valued process
\begin{equation}
\label{eqn3.3e} 
\nabla X_s = \nabla X_\theta + \int_{\theta}^{s} \partial_x \mu(r, X_r) \nabla X_r\,dr + \int_{\theta}^{s} \partial_x \sigma(r, X_r) \nabla X_r\,dB_r^H,
\end{equation}
Also, we have 
\begin{equation*}
D^{(H)}_\theta X_s = \sigma(\theta, X_\theta) + \int_{\theta}^{s} \partial_x \mu(r, X_r) D^{(H)}_\theta X_r\,dr + \int_{\theta}^{s} \partial_x \sigma(r, X_r) D^{(H)}_\theta X_r\,dB_r^H,    
\end{equation*}
as a consequence of uniqueness of the solution of the fractional SDE, for $t \le \theta \le s \le T$,
\begin{equation}
\label{eqn3.7e}
D^{(H)}_\theta X_s = \nabla X_s (\nabla X_\theta)^{-1}\sigma(\theta, X_\theta).
\end{equation}

for any $1 \le i \le d$,
\begin{equation}
\label{eqn3.5e}
\begin{aligned}
(D^{(H)}_\theta)_i Y_s =& g^\prime(X_T)(D^{(H)}_\theta)_i X_T + \int_{s}^{T} F\left(r, (D^{(H)}_\theta)_i Y_s, (D^{(H)}_\theta)_i Z_s\right) \,dr\\
&-\int_{s}^{T}(D^{(H)}_\theta)_i Z_r\,dB_r^H\\
F(r, u, v) =& f_x^{\prime}\left(X_r, Y_r, Z_r\right)(D^{(H)}_\theta)_i X_r + f_y^{\prime}\left(X_r, Y_r, Z_r\right)u + f_z^{\prime}\left(X_r, Y_r, Z_r\right)v.
\end{aligned}    
\end{equation}

Also, we have 
\begin{equation}
\label{eqn3.6e}
\begin{aligned}
&\nabla Y_s = g^\prime(X_T)\nabla X_T + \int_{s}^{T} F\left(r, \nabla Y_s, \nabla Z_s\right) \,dr -\int_{s}^{T}\nabla Z_r\,dB_r^H\\
&F(r, u, v) = f_x^{\prime}\left(X_r, Y_r, Z_r\right)\nabla X_r + f_y^{\prime}\left(X_r, Y_r, Z_r\right)u + f_z^{\prime}\left(X_r, Y_r, Z_r\right)v.
\end{aligned}    
\end{equation}
\end{proof}
Thanks to \cite{HU2009}, the linear backward stochastic equation \eqref{eqn3.5e} can be solved by the unique solution $\left\{(D^{(H)}_\theta)_i Y_s, (D^{(H)}_\theta)_i Z_s; 0 \le s \le T\right\}$.
It is deduced from the uniqueness of \eqref{eqn3.5e}, \eqref{eqn3.6e} and equation \eqref{eqn3.7e} that 
\begin{equation}
\label{eqn3.8e} 
D^{(H)}_\theta Y_s = \nabla Y_s (\nabla X_\theta)^{-1}\sigma(\theta, X_\theta)  
\end{equation}
for $t \le \theta \le s \le T$.

Furthermore, 
\begin{equation}
\label{eqn3.9e}
Z_s = D^{(H)}_s Y_s = \nabla Y_s (\nabla X_s)^{-1}\sigma(s, X_s)    
\end{equation}
for $0 \le s \le T$.

\begin{theorem}
\label{thmpder}
if $Y_t = u(t, X_t)$, where $u(t, x)$ is continuously differentiable with respect to $t$ and twice continuously differentiable with respect with to $x$,
then $u(t, x)$ is the solution of \eqref{eqn10}.
\end{theorem}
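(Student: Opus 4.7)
The plan is to apply the Itô formula for Wick integration (Theorem \ref{thmito}) to the composite process $u(t, X_t)$ componentwise, and then compare the resulting decomposition with the backward equation in \eqref{eqn3}. Because $u \in C^{1,2}$ and $(\mu, \sigma)$ meet the hypotheses of Theorem \ref{thmito}, applying that theorem to each scalar component $u^\ell(t, X_t)$ for $\ell = 1, \ldots, k$ and collecting the $k$ identities into a single vector form produces
\begin{equation*}
u(t, X_t) = u(0, X_0) + \int_0^t \Bigl[\frac{\partial u}{\partial s} + \mathcal{L}_{frac} u\Bigr](s, X_s)\,ds + \int_0^t (\nabla u\,\sigma)(s, X_s)\,dB_s^H.
\end{equation*}
On the other hand, the hypothesis $Y_t = u(t, X_t)$ inserted into \eqref{eqn3} yields the alternative decomposition
\begin{equation*}
u(t, X_t) = Y_t = Y_0 - \int_0^t f(s, X_s, Y_s, Z_s)\,ds + \int_0^t Z_s\,dB_s^H.
\end{equation*}

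Next, I equate the two decompositions and appeal to uniqueness of the canonical drift--fWIS decomposition, which is a consequence of the fractional Wiener--Itô chaos expansion (Theorem \ref{fwie}) together with the isometry \eqref{eqn38}: the $dB^H$ integrands must agree and the $ds$ integrands must agree. The $dB^H$ part gives
\begin{equation*}
Z_s = (\nabla u\,\sigma)(s, X_s),
\end{equation*}
which is consistent with Lemma \ref{lemma3.2}, since from $Y_s = u(s, X_s)$ one has $\nabla Y_s = \nabla u(s, X_s)\,\nabla X_s$ and hence $\nabla Y_s(\nabla X_s)^{-1}\sigma = \nabla u\,\sigma$. The $ds$ part gives
\begin{equation*}
\frac{\partial u}{\partial s}(s, X_s) + \mathcal{L}_{frac} u(s, X_s) + f\bigl(s, X_s, u(s, X_s), (\nabla u\,\sigma)(s, X_s)\bigr) = 0 \qquad \text{a.s., for a.e. } s.
\end{equation*}

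To convert this almost-sure pathwise identity into a pointwise PDE in $(t, x)$, I specialise to the Markov flow $X^{t,x}$ starting from $x$ at time $t$; evaluating the displayed identity at $s = t$ and using $X^{t,x}_t = x$ deterministically together with continuity of $\partial_t u$, $\mathcal{L}_{frac} u$, $f$ and $\nabla u\,\sigma$ yields
\begin{equation*}
\frac{\partial u}{\partial t}(t, x) + \mathcal{L}_{frac} u(t, x) + f(t, x, u(t, x), (\nabla u\,\sigma)(t, x)) = 0,
\end{equation*}
for every $(t, x) \in [0, T] \times \mathbb{R}^d$. The terminal condition $u(T, x) = g(x)$ then follows from $Y^{T,x}_T = g(X^{T,x}_T) = g(x)$.

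The main obstacle is the passage from the almost-sure identity along the random trajectory $X_s$ to the deterministic pointwise PDE, which is precisely why one must vary the initial state of the forward process; this is where the Markovian reset $X^{t,x}_t = x$ does the work. A secondary but more technical point is checking that $(\nabla u\,\sigma)(s, X_s)$ belongs to the admissible class $\mathcal{L}(0, T)$ so that the fWIS integrals in the Itô formula are well-defined and the chaos-expansion uniqueness applies; this is ensured by the boundedness of the derivatives of $u$ combined with the regularity of $\sigma$ already demanded in Theorem \ref{thmito}.
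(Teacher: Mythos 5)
Your overall scaffolding (apply Theorem \ref{thmito} to $u(t,X_t)$, compare with the backward equation \eqref{eqn3}, then localize via $X^{t,x}_t=x$) matches the paper, but the step on which everything hinges is not justified: you identify the $ds$ and $dB^H$ integrands by invoking ``uniqueness of the canonical drift--fWIS decomposition'' as a consequence of Theorem \ref{fwie} and the isometry \eqref{eqn38}. In the Wick--fBM setting with $H>\tfrac12$ this is exactly the delicate point. The fWIS integral is not a martingale, $B^H$ has zero quadratic variation, and \eqref{eqn38} is not a plain It\^o isometry --- it carries the extra Malliavin-trace term $\bigl(\int_0^T D^\phi_s f_s\,ds\bigr)^2$ --- so the classical argument ``bounded-variation part $=$ martingale part $\Rightarrow$ both vanish'' does not transfer, and neither chaos-expansion uniqueness of a single $L^2$ random variable nor \eqref{eqn38} yields, without further work, that $\int_0^t A_s\,ds+\int_0^t B_s\,dB^H_s=0$ for all $t$ forces $A\equiv 0$ and $B\equiv 0$. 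As stated, this is a missing lemma, not a routine remark.

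The paper avoids this entirely: it never separates the decomposition abstractly, but instead uses Lemma \ref{lemma3.2} --- proved beforehand via the Malliavin derivative, Lemma \ref{lem0}, the relation $D^{(H)}_sY_s=Z_s$, and uniqueness for the linearized BSDE --- to conclude $Z_s=\nabla Y_s(\nabla X_s)^{-1}\sigma(s,X_s)=(\nabla u\,\sigma)(s,X_s)$ directly, so that the two stochastic integrals cancel identically; what remains after summing over a partition and refining it is a purely Lebesgue-integral identity, from which the PDE \eqref{eqn10} and the terminal condition follow. To repair your argument, either replace the decomposition-uniqueness appeal by Lemma \ref{lemma3.2} (which you only cite as a consistency check), or actually prove the uniqueness statement, e.g.\ by passing to the Stratonovich-type integral via Theorem \ref{thmtrans} and exploiting the H\"older roughness of $B^H$ to rule out an absolutely continuous Young integral with nonzero integrand --- but that is a nontrivial argument that must be supplied, not assumed.
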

\begin{proof}
Apply Theorem \ref{thmito} to $u(t, X_t)$ from $t$ to $T$,
\begin{equation}
\begin{aligned}
u(t+h,x)- u(t,x)=& u(t+h,x)-u(t+h, X_{t+h})+u(t+h, X_{t+h}) -  u(t,x)\\
&=-\int_{t}^{t+h} \mathcal{L}_{frac} u(t+h,X_s) + f \,ds \\ 
&- \sum_{i = 1}^{d} \int_{t}^{t+h}\sigma_i(t+h,X_s)  \frac{\partial u(t+h,X_s)}{\partial x^i}\,dB^H_s\\
&+ \int_{t}^{t+h} Z_s \,dB^H_s \quad a.s.
\end{aligned} 
\end{equation}
where we apply Theorem \ref{thmito} and the fBSDE. Let $t = t_0 < t_1 < \cdots < t_N = T$, then
\begin{equation}
\begin{aligned}
g(X_T) - u(t,x) = &- \sum_{i=0}^{N-1}\int_{t_i}^{t_{i+1}} \left[\mathcal{L}_{frac} u(t_{i+1},X_s) + f(s, X_s, Y_s, Z_s)\right]  \,ds\\
&+ \sum_{i=0}^{N-1} \int_{t_i}^{t_{i+1}}\left[Z_s -\sum_{i = 1}^{d}\sigma_i(t+h,X_s)  \frac{\partial u(t+h,X_s)}{\partial x^i}\right] \,dB^H_s,
\end{aligned}
\end{equation}
in view of Lemma \ref{lemma3.2}, let $\lim_{n \to \infty}\sup_{i \le n-1}(t_i^{n+1}-t_i^{n})=0$, then
\begin{equation}
u(t, x) = g(x) + \int_{t}^{T}\left[\mathcal{L}_{frac} u(s,X_s) + f\left(s, x, u(s, x), \sigma(s, x)\nabla u(s,x)\right) \right]  \,ds.
\end{equation}
\end{proof}

\begin{remark}
From the proof of Theorem \ref{thmpde} and \ref{thmpder}, we can obtain 
\begin{equation}\label{eqn12}
\begin{cases}
\begin{aligned}
&u(t,X_t) = Y_t,\\
&\sigma(t,X_t)\nabla u(t,X_t) = Z_t.
\end{aligned}
\end{cases}
\end{equation}
\end{remark}
And Theorem \ref{thmpde} and \ref{thmpder} tell us the uniqueness and existence of the solution of fBSDE \eqref{eqn3} can be proved by
studying the the uniqueness and existence of the solution of PDE \eqref{eqn10}.
\color{black}
It's worth to consider an example of the geometric fractional Brownian motion which solves the fractional SDE that
\begin{equation}\label{eqn29}
dX_t = \mu X_t dt + \sigma X_t dB_t^H, X_{0} = x_0 \ge 0,
\end{equation}
where $x_0$, $\mu$, $\sigma$ are constants.
\begin{proposition}\label{pro1}
The solution of \eqref{eqn29} is $X_t = x_0\exp^\diamond{\left(\mu t + \sigma B_t^H\right)}$, i.e. $X_t = x_0\exp{\left(\mu t + \sigma B_t^H - \frac{1}{2} \sigma^2 t^{2H}\right)}$.
\end{proposition}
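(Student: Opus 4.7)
The plan is to verify the closed-form candidate by direct application of the Wick Itô formula (Theorem \ref{thmito}). Set
\begin{equation*}
f(t,x) = x_0 \exp\!\Bigl(\mu t + \sigma x - \tfrac{1}{2}\sigma^{2} t^{2H}\Bigr),
\end{equation*}
so that the candidate solution is $X_t = f(t, B_t^H)$. Because $B_t^H$ itself satisfies the trivial Wick-Itô SDE with drift $0$ and diffusion $1$, I would apply Theorem \ref{thmito} with $\xi = 0$, $\mu_s \equiv 0$, $\sigma_s \equiv 1$. The regularity hypotheses reduce to checking that $f, f_t, f_x, f_{xx}$ are smooth with the required boundedness/growth on the relevant time interval and that the integrand $f_x(s, B_s^H)$ lies in $\mathcal{L}(0,T)$; these are standard facts for exponentials of $\sigma B_s^H$ once one truncates, which I would address only briefly.

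The core computation is as follows. First I would compute $D^{\phi}_s B_s^H$ using $B_s^H = \int_0^\infty \mathbb{1}_{[0,s]}(r)\,dB_r^H$ and the definition of $\phi$:
\begin{equation*}
D^{\phi}_s B_s^H = \int_0^s \phi(s,r)\,dr = H(2H-1)\int_0^s (s-r)^{2H-2}\,dr = H s^{2H-1}.
\end{equation*}
Next the partial derivatives: $f_t = f\cdot(\mu - H\sigma^2 t^{2H-1})$, $f_x = \sigma f$, $f_{xx} = \sigma^{2} f$. Plugging into the Itô formula \eqref{eqn9} yields
\begin{equation*}
X_t - x_0 = \int_0^t X_s\bigl(\mu - H\sigma^2 s^{2H-1}\bigr)\,ds + \int_0^t \sigma^{2} X_s \cdot H s^{2H-1}\,ds + \int_0^t \sigma X_s\,dB_s^H,
\end{equation*}
and the two middle integrals cancel identically, leaving the desired SDE $dX_t = \mu X_t\,dt + \sigma X_t\,dB_t^H$ with $X_0 = x_0$. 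Uniqueness is guaranteed by the assumption (1) of Sec.\ref{sect3} on the fractional SDE.

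For the equivalence of the two stated forms, I would remark that $\sigma B_t^H$ is centered Gaussian with variance $\sigma^{2} t^{2H}$, so the Wick exponential collapses to the ordinary one via $\exp^{\diamond}(\sigma B_t^H) = \exp\bigl(\sigma B_t^H - \tfrac{1}{2}\sigma^{2} t^{2H}\bigr)$, which is the classical identity for Wick-renormalised Gaussian exponentials in the Hida distribution framework. I expect the only genuine friction point to be the verification of $D^\phi_s B_s^H = H s^{2H-1}$ at the diagonal, since this requires confirming that the formal expression $\int_0^s \phi(s,r)\,dr$ is indeed the admissible $\phi$-derivative trace; once that is in hand, the cancellation is purely algebraic and the proposition follows in a few lines.
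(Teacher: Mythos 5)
Your verification is essentially correct, but it is a genuinely different argument from the one the paper relies on: the paper gives no proof at all and defers to \cite{HU2011}, where the equation is read as the Wick-linear equation $\frac{dX_t}{dt}=\mu X_t+\sigma X_t\diamond W_H(t)$ in $(S)^*_H$ and solved directly by differentiating the Wick exponential, with the identity $\exp^{\diamond}(\sigma B_t^H)=\exp\bigl(\sigma B_t^H-\tfrac{1}{2}\sigma^2 t^{2H}\bigr)$ (valid because $\lvert\sigma\chi_{[0,t]}\rvert_\phi^2=\sigma^2 t^{2H}$) converting the Wick form into the ordinary exponential. Your route instead checks the closed form a posteriori with Theorem \ref{thmito} applied to $f(t,B_t^H)$, and the algebra is right: $D^\phi_s B_s^H=\int_0^s\phi(s,r)\,dr=Hs^{2H-1}$, $f_t=f(\mu-H\sigma^2 t^{2H-1})$, $f_x=\sigma f$, $f_{xx}=\sigma^2 f$, and the drift correction cancels against the $f_{xx}\,D^\phi$ term, recovering $dX_t=\mu X_t\,dt+\sigma X_t\,dB_t^H$. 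What the Wick-calculus route buys is that it works directly in the distribution space, produces both stated forms at once, and avoids the regularity hypotheses of the Itô formula; what your route buys is that it stays entirely inside the paper's stated toolbox and is purely computational. The price is that Theorem \ref{thmito} as stated requires bounded derivatives of $f$ and membership of $\sigma f_x(s,B_s^H)=\sigma^2 X_s$ in $\mathcal{L}(0,T)$, neither of which holds literally for the exponential, so the truncation/localization you mention in passing (and the integrability of the geometric fBM needed to pass to the limit, which is where a computation like Proposition \ref{pro2} enters) is a real step rather than a formality; and note that in either approach uniqueness does not come for free from the verification but from the quasilinear fractional SDE theory (\cite{mishura2008stochastic}) or the standing assumption of Sec.\ref{sect3}, as you correctly flag.
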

The proof can be found in \cite{HU2011}.

\begin{proposition}\label{pro2}
Let $s \le t$, the solution of \eqref{eqn29} has a derivative $D^\phi_s X_t = \sigma H X_t\left[s^{2H-1} - (t-s)^{2H-1}\right] $ 
\end{proposition}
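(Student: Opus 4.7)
The plan is to combine the explicit solution in Proposition \ref{pro1} with the chain rule of Lemma \ref{lem1}, thereby reducing the proposition to a single direct computation of $D^\phi_s B_t^H$.

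First I would write $X_t = x_0 \exp(G_t)$ with $G_t := \mu t + \sigma B_t^H - \tfrac{1}{2}\sigma^2 t^{2H}$, using Proposition \ref{pro1}. The terms $\mu t$ and $-\tfrac{1}{2}\sigma^2 t^{2H}$ are deterministic, so their $\phi$-derivatives vanish identically. Applying Lemma \ref{lem1} to the smooth outer function $y \mapsto x_0 e^y$ then gives
\begin{equation*}
D^\phi_s X_t \;=\; X_t \cdot D^\phi_s G_t \;=\; \sigma\, X_t \cdot D^\phi_s B_t^H.
\end{equation*}

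Next I would evaluate $D^\phi_s B_t^H$ straight from the definition of the $\phi$-derivative. Realising $B_t^H$ as $\langle\omega, \chi_{[0,t]}\rangle$, the directional derivative in the direction $\Phi_g$ is
\begin{equation*}
D_{\Phi_g} B_t^H \;=\; \int_0^t \Phi_g(r)\, dr \;=\; \int_0^\infty g(s)\left[\int_0^t \phi(r,s)\, dr\right] ds.
\end{equation*}
Comparing this against the defining relation $D_{\Phi_g} F = \int_0^\infty D^\phi_s F\, g(s)\, ds$ identifies
\begin{equation*}
D^\phi_s B_t^H \;=\; \int_0^t \phi(r,s)\, dr \;=\; H(2H-1)\int_0^t |r-s|^{2H-2}\, dr.
\end{equation*}

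Finally, for $0 \le s \le t$, I would split the integral at $r = s$,
\begin{equation*}
\int_0^t |r-s|^{2H-2}\, dr = \int_0^s (s-r)^{2H-2}\, dr + \int_s^t (r-s)^{2H-2}\, dr,
\end{equation*}
evaluate each antiderivative (each piece produces a factor $1/(2H-1)$ which cancels the prefactor $H(2H-1)$), and substitute the resulting expression for $D^\phi_s B_t^H$ back into the formula for $D^\phi_s X_t$ found in the first step. This yields the claimed closed form up to the signs in the bracket. The main obstacle is purely bookkeeping --- care in removing the absolute value and keeping track of orientation on $[0,s]$, and a sanity check at the endpoints $s=0$ and $s=t$ where one of the two pieces collapses --- but no genuinely hard analytic step is required, since the chain rule of Lemma \ref{lem1} together with the explicit exponential form of Proposition \ref{pro1} does essentially all the work.
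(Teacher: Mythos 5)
Your proposal follows essentially the same route as the paper's proof: apply the chain rule of Lemma~\ref{lem1} to the explicit solution $X_t = x_0\exp\left(\mu t + \sigma B_t^H - \tfrac{1}{2}\sigma^2 t^{2H}\right)$ from Proposition~\ref{pro1}, reduce everything to $D^\phi_s B_t^H = \int_0^t \phi(u,s)\,du$, and evaluate that integral (the paper states the same chain of equalities but leaves the integral evaluation implicit, which you spell out). Concerning the sign you hedge on: your split of the integral is correct and yields $D^\phi_s B_t^H = H\left[s^{2H-1} + (t-s)^{2H-1}\right]$ for $0 \le s \le t$, so the minus sign in the statement (and in the last line of the paper's displayed computation) is a sign slip in the paper rather than a gap in your argument.
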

\begin{proof}
From Lemma \ref{lem1}, $D^\phi_s X_t$ exists and  
\begin{equation}\label{eqn31}
\begin{aligned}
D^\phi_s X_t &= \frac{\partial X_t}{\partial x} D^\phi_s B_t^H\\
&=  \sigma x_0 \exp{\left(\mu t + \sigma B_t^H - \frac{1}{2} \sigma^2 t^{2H}\right)} \int_{0}^{t} \phi(u,s)\, du\\
&= \sigma H X_t \left[s^{2H-1} - (t-s)^{2H-1}\right].
\end{aligned} 
\end{equation}
\end{proof}

\begin{corollary}\label{cor1}
Consider $X_t = \eta_0 + \int_{0}^{t} \mu X_s \,ds + \int_{0}^{t} \sigma X_s \,dB_s^H$, $\mu \in \mathbb{R}$, $\sigma \in \mathbb{R}$, $\eta_0 \in \mathbb{R}^d$ are constants,
if $u \in C^{1,2}([0,T] \times \mathbb{R}^d)$ solves 
\begin{equation}\label{eqn32}
\begin{cases}
\begin{aligned}
\frac{\partial u(t,x)}{\partial t} + \mathcal{L}_{frac} u(t,x) + f(t,x,u(t,x),(\nabla u\sigma)(t,x))&=0,\\
u(T,x)&=g(x),\\   
\end{aligned}
\end{cases}
\end{equation}
and
\begin{equation*}
\mathcal{L}_{frac} = 
\begin{pmatrix}
L_{frac} u_1\\
\cdots\\
L_{frac} u_k
\end{pmatrix},
\end{equation*} 
\begin{equation*}
L_{frac} = \sum_{i,j = 1}^{d}\sigma^2 H x_j^2 t^{2H-1} \frac{\partial^2}{\partial x_i \partial x_j}+\sum_{i = 1}^{d}\mu x_i \frac{\partial}{\partial x_i}.
\end{equation*} 
Then $u(t,x) = Y^{t,x}_t$ where $\left\{(Y_s^{t,x},Z_s^{t,x}), t \le s \le T \right\} $  is the solution of the BSDE \eqref{eqn3}.
\end{corollary}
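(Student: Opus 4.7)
The plan is to view Corollary \ref{cor1} as a direct instantiation of Theorem \ref{thmpde} with the specific coefficients $\mu(s,x)=\mu x$ and $\sigma(s,x)=\sigma x$ acting componentwise on $\mathbb{R}^d$. The proof then splits into verifying the regularity hypotheses of Theorems \ref{thmito}--\ref{thmpde} for the geometric fBM, and identifying the explicit form of the operator $L_{frac}$ that emerges after substitution.

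First, I would check the hypotheses of Theorem \ref{thmito}. By Proposition \ref{pro1}, each component of $X_t = \eta_0\exp^{\diamond}(\mu t + \sigma B_t^H)$ admits an explicit Wick-exponential form and lies in $L^p$ for every $p<\infty$ uniformly on $[0,T]$; in particular $E\sup_{0\le s\le T}|\mu X_s|<\infty$ is immediate. The Hölder estimate $E|\sigma X_u-\sigma X_v|^2\le C|u-v|^{2H}$ inherits from the $H$-Hölder regularity of $B^H$ combined with the closed form, so we may take $\alpha=H>1-H$ since $H>\tfrac{1}{2}$. The $\phi$-differentiability of $\sigma X_s$ and the $L^2$-continuity of the trace $D^\phi_u(\sigma X_u-\sigma X_v)$ follow from the chain rule (Lemma \ref{lem1}) applied to the exponential representation together with the explicit formula of Proposition \ref{pro2}.

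Second, I would compute $L_{frac}$ explicitly. Proposition \ref{pro2} gives $D^\phi_s X_t = \sigma H X_t\bigl[s^{2H-1}-(t-s)^{2H-1}\bigr]$; evaluating at the diagonal $s=t$ and using $2H-1>0$ to kill the boundary term yields $D^\phi_t X^j_t = \sigma H X^j_t\, t^{2H-1}$. Substituting $\sigma_i(t,x)=\sigma x_i$, $\mu_i(t,x)=\mu x_i$, and $D^\phi_t x_j = \sigma H x_j t^{2H-1}$ into the generic $L_{frac}$ of Theorem \ref{thmpde} reproduces the expression stated in \eqref{eqn32} (up to the symmetrization of the quadratic term). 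Theorem \ref{thmpde} then delivers $u(t,X_t)=Y_t$, i.e.\ $u(t,x) = Y^{t,x}_t$.

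The main obstacle I foresee is the bookkeeping for the trace and $\phi$-differentiability conditions on $\sigma X_s$, since these are placed in Theorem \ref{thmito} precisely to legitimize the fWIS Itô formula underlying Theorem \ref{thmpde}. They are not deep, but they do require one to push the chain rule of Lemma \ref{lem1} through the Wick-exponential, control the small-increment behavior of $D^\phi_u X_u$ near the diagonal using Proposition \ref{pro2}, and verify the Riemann-sum approximation in the definition of $\mathcal{L}(0,T)$. Once these items are dispatched, the corollary reduces to a substitution in the already-proved Theorem \ref{thmpde}.
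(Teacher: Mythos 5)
Your proposal is correct and follows exactly the route the paper takes: the paper's proof is the one-line remark that the corollary follows from Theorem \ref{thmpde} together with Proposition \ref{pro2}, i.e.\ substituting $\mu(s,x)=\mu x$, $\sigma(s,x)=\sigma x$ and $D^\phi_t X_t^j=\sigma H X_t^j t^{2H-1}$ (the diagonal evaluation of Proposition \ref{pro2}) into the generic operator $L_{frac}$. Your additional verification of the hypotheses of Theorem \ref{thmito} for the geometric fBM is a more careful rendering of a step the paper leaves implicit, not a different argument.
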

\begin{proof}
Easily proved from \ref{thmpde} and \ref{pro2}.  
\end{proof}

\section{RNN-BSDE method}\label{sect4}
Before we start to build our network, we should apply a time discretization to BSDEs \eqref{eqn3}. Consider the partition $\pi: 0=t_0 < t_1 < \cdots < t_N = T$, for any $t_n < t _{n+1}$ on $[0,T]$, from Definition \ref{def2.1}, it holds that
\begin{equation}
\label{eqn35}
\begin{cases}
Y_{t_{n+1}} = Y_{t_n} - f(t_n,X_{t_n},Y_{t_n},Z_{t_n})\left(t_{n+1}-t_n\right) + Z_{t_n} \diamond \Delta B^H_{t_n},\\
\end{cases}
\end{equation}
to simulate the fractional Brownian motion $B^H$ at the discrete time, there have already been some methods like the Hosking method\cite{Hosking}, the Cholesky 
method\cite{asmussen1998stochastic}, the Davies and Harte method\cite{davies} and so on. In this paper, we choose the Cholesky 
method. 
\color{black}
Return to our topic, to deal with the Wick product, we use Theorem \ref{thmtrans} and obtain that
\begin{equation}\label{eqn20}
\begin{aligned}
Y_{t_{n+1}} =& Y_{t_n} - f(t_n,X_{t_n},Y_{t_n},Z_{t_n})\left(t_{n+1}-t_n\right) + Z_{t_n}(B^H_{t_{n+1}}-B^H_{t_n})\\
&- D^\phi_{t_n} Z_{t_n}\left(t_{n+1}-t_n\right).
\end{aligned}
\end{equation}
The approximation scheme is still incomplete because $D^\phi_{t_n} Z_{t_n}$ is unknown since $Z_{t_n}$ is which we need to find if it's a problem of solving the fBSDEs.
Of course, it may be an idea that we construct another neural network to approximate $D^\phi_{t_n} Z_{t_n}$ just like what we do for approximating $Z_{t_n}$ that we will introduce soon.
But we prefer to give another way, from Theorem \ref{thmpde} and \eqref{eqn1}, we can consider $Z_t$ as $g(t,X_t)$, so in view of Lemma \ref{lem1} 
\begin{equation*}
\begin{aligned}
D^\phi_s Z_t &= D^\phi_s g(t,X_t)\\ 
&= \frac{\partial g(t,X_t)}{\partial x} D^\phi_s X_t.
\end{aligned}
\end{equation*}
Besides, if $X_t$ is the solution of \eqref{eqn29},
in view of Proposition \ref{pro2},
\begin{equation*}
\begin{aligned}
D^\phi_s Z_t &= \frac{\partial g(t,X_t)}{\partial x} D^\phi_s X_t\\
&= \sigma H X_t\frac{\partial g(t,X_t)}{\partial x}[s^{2H-1}-(s-t)^{2H-1}].
\end{aligned}
\end{equation*}
Then we can rewrite \eqref{eqn20} as
\begin{equation*}
\begin{aligned}
Y_{t_{n+1}} = &Y_{t_n} -\left( f(t_n,X_{t_n},Y_{t_n},Z_{t_n}) + \partial_x^\ast Z_{t_n} \cdot D^\phi_{t_n} X_{t_n}\right)  \Delta t_n\\ 
&+ Z_{t_n}\Delta B_{t_n}^H ,    
\end{aligned}
\end{equation*}
where $\partial_x^\ast$ means automatic differentiation.

\subsection{Main ideas of the RNN-BSDE method}
In our work, we develop an algorithm for solving fractional BSDEs based on deep BSDE method\cite{2017Deep} and refer to it as RNN-BSDE method. The reason why it's necessary to develop a new
algorithm is, as we all know, fractional Brownian motions are not Markov processes. Besides, fBMs have the property of long-range dependence if $H \in (\frac{1}{2},1)$ and short-range dependence if $H \in (0,\frac{1}{2})$(see e.g. \cite{mishura2008stochastic}).
It means the increments of $B^H_t$ can't be non-correlated if $H \ne \frac{1}{2}$. When we approximate $(Z_{t_n},Y_{t_{n+1}})$, it will not be satisfying if we only consider using the 
information of $X_{t_n}$ in the input layer. Instead, we want to make full use of all the information before time $t_{n+1}$, so a recurrent neural network is a better choice than
a feedforward neural network.

Recurrent neural network(RNN) structure\cite{ELMAN1990179}\cite{JORDAN1997471}, which is classical for dealing with time series, has some advantages of solving fractional BSDEs that 
\begin{enumerate}
\item RNN can make full use of more information before time $t_{n+1}$.
RNN processes the sequence data stored in rank-3 tensors of shape (samples, timesteps, features). the computations of $X_{t_n}$, $n=0,1,\cdots,N-1$ in the recurrent unit which is the fundamental building block
of an RNN can be expressed as 
\begin{equation}\label{eqn22}
h_{t_{n}} = f(X_{t_n}*U+h_{t_{n-1}}*W+b),\\    
\end{equation}
where $U$,$W$ are weight matrices, $b$ are biases, $h_{t_n}$ means the hidden state of hidden layers at time $t_n$, $f$ is the activation function. For RNN, consider \eqref{eqn22} with each $t = t_n,\quad n=0,1,\cdots,N-1$, then
\begin{equation*}
\begin{aligned}
h_{t_{n}} =& f(U*X_{t_n}+W*f(U*X_{t_{n-1}}+W*h_{t_{n-2}}+b)+b)\\
=& f(U*X_{t_n}+W*f(U*X_{t_{n-1}}+W*f(\cdots f(U*X_{t_0}+W*h_{0}\\
&+b))+b))\\
=& f(X_{t_n},X_{t_{n-1}},\dots,X_{t_0}),
\end{aligned}
\end{equation*}

clearly, RNN satisfies what we required above.
\item For $\left\{t_n|n = 0,1,2,\cdots,N\right\}  \subseteq [0,T]$, there are $N-1$ FNNs in deep BSDE method, what means the larger N is, the more parameters in neural networks 
that need to be determined are, which may be a burden of compute. But if we use an RNN structure, however large N is, there will be always one RNN since the hidden layer has 
a recurrent structure so that for any timestep $t_n \in \left\{t_n|n = 0,1,2,\cdots,N\right\}$, the weight matrices and biases are common and reused in an epoch. As mentioned in \cite{2017Deep}, for $N+1$ time nodes, one $d$-dimensional input layer, two $(d+10)$-dimensional hidden layers and one $d$-dimensional output layer, 
there will be $\left\{d+1+(N-1) \left[2d(d + 10) + (d + 10)^2 + 4(d + 10)+4d\right]\right\}$ parameters to be trained for deep BSDE while $[1 + 2d(d+10) + 3(d + 10)^2 + d^2 + 6(d+10) + 3d]$ parameters to be trained for RNN-BSDE using a stacked-RNN.
\end{enumerate}

The main idea of the RNN-BSDE method can be expressed as.
\begin{flalign}\label{eqn23}
&\ Z_{t_n} = subRNN(X_{t_n};\theta),  \quad n=0,1,\cdots,N-1&
\end{flalign}
\begin{flalign}\label{eqn24}
&\
\begin{aligned}
Y_{t_{n+1}} = &Y_{t_n} -\left( f(t_n,X_{t_n},Y_{t_n},Z_{t_n}) + \partial_x^\ast Z_{t_n} \cdot D^\phi_{t_n} X_{t_n}\right)  \Delta t_n\\ 
&+ Z_{t_n}\Delta B_{t_n}^H ,    
\end{aligned}
&,
\end{flalign}
\begin{flalign}\label{eqn25}
&\ loss = E\left\lvert g(X_T)-Y_T\right\rvert ^2,&
\end{flalign}
\begin{flalign}\label{eqn26}
&(Y_0^{new}, Z_0^{new}, \left\{Z_n^{new}\right\}, \theta^{new} ) = BP(loss(DNN(X_t))).  \quad n=1,\cdots,N-1 &
\end{flalign}     
In \eqref{eqn23}, the sub-neural network is an RNN instead of $N-1$ FNNs, and to make the RNN more effective, we choose a stacked RNN rather than a simple RNN, which the structure is shown in Fig.\ref{fig1}. 
\begin{figure}[h]
\centering
\includegraphics{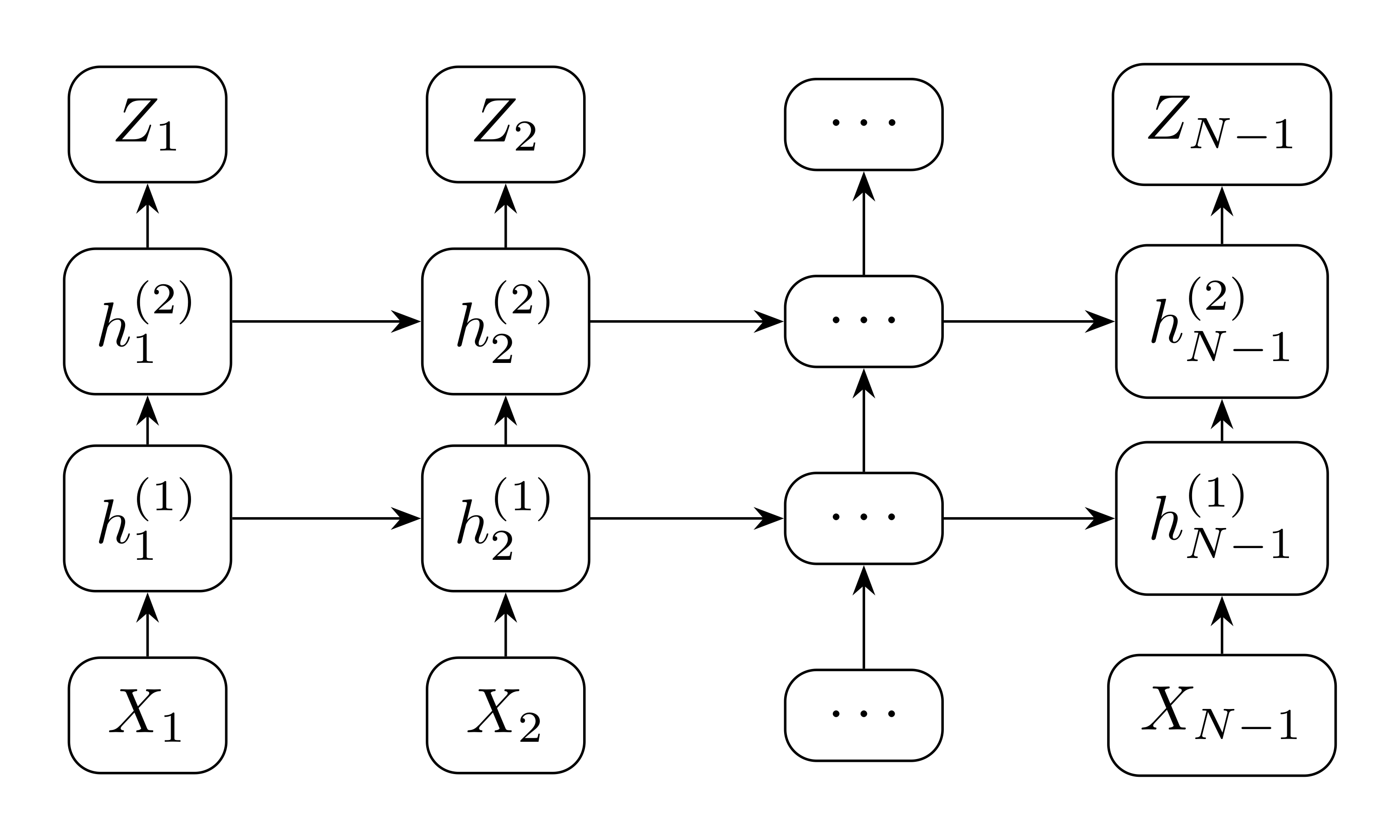}
\caption{Rough sketch of the architecture of the stacked RNN. $t_n$ is simply written as $n$, $h_{j}^{(i)}$ means the hidden state of the $i$th hidden layer at time $t_j$.\label{fig1}}
\end{figure}

The whole flow in the direction of forward propagation can be seen in Fig.\ref{fig2}.
Besides, when apply the deep BSDE method, batch normalization\cite{bn} is adopted right after each matrix multiplication and before activation. Notice that batch normalization is not suitable for RNNs, instead, we choose layer normalization\cite{ba2016layer}.
Finally, we provide the pseudocode of RNN-BSDE method as following:   
\begin{figure}
\centering
\includegraphics{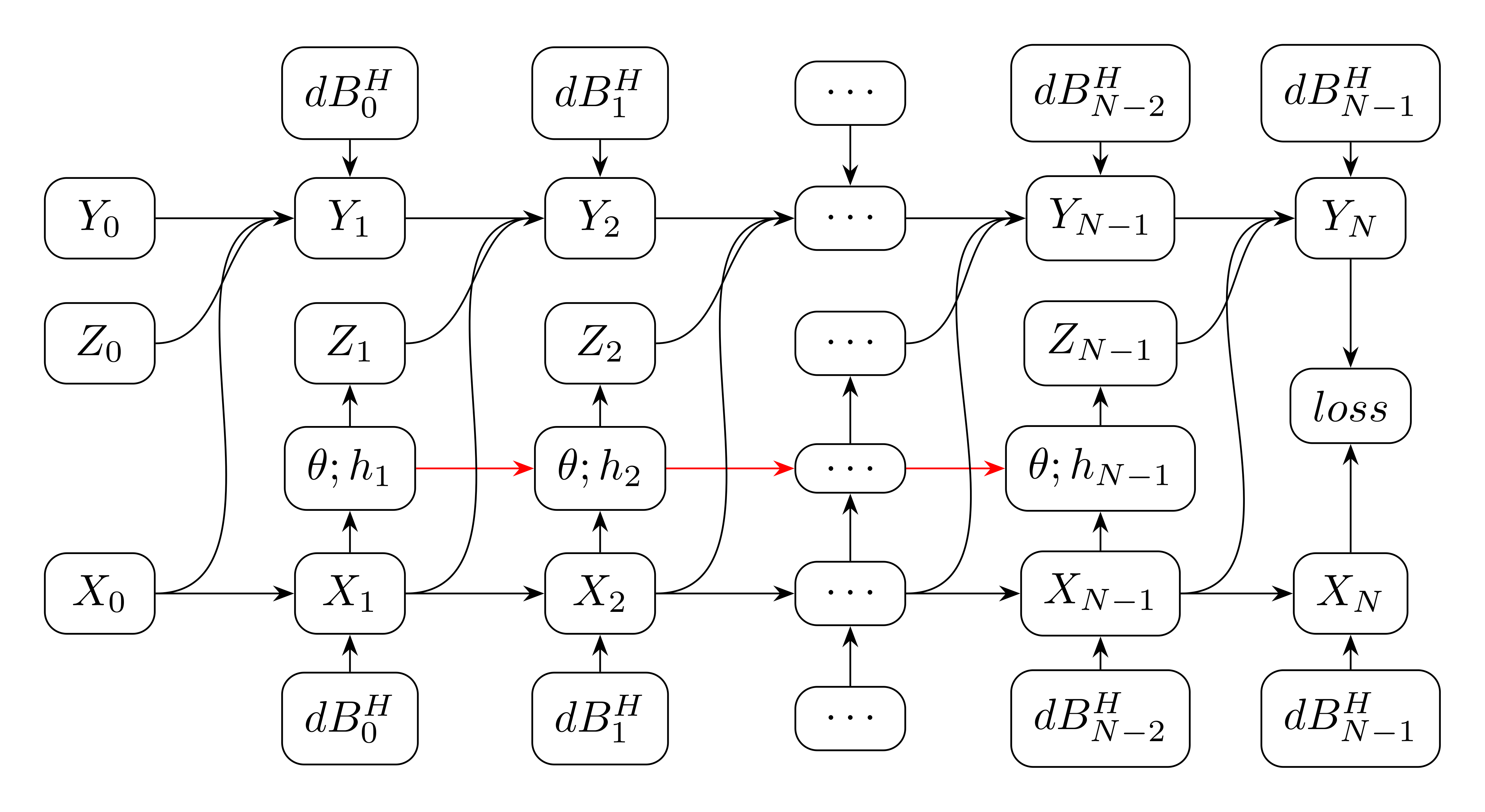}
\caption{Rough sketch of the architecture of the RNN-BSDE method. $\theta$ represents the parameters to be trained.\label{fig2}}
\end{figure}                   
\begin{algorithm}
    \caption{The RNN-BSDE method}
    \begin{algorithmic}[1]
    \Require Initial parameters $\left(y^{m, 0}_0, \theta^0 \right)$, $\left\{X^m_{t_j}\right\}$ samples;
    \Ensure $\left(\tilde{Y}^{m, i}, \tilde{Z}^{m, i}\right)$;
    \For{$i=0 \to maxstep-1$}
    \State $ \left\{\tilde{Z}^{m, i}_{t_j} \right\} \gets sub\mathcal{RNN}(\left\{X^m_{t_j}\right\};\theta^i)$;
    \State $\left\{\partial_x^\ast \tilde{Z}^{m, i}_{t_j}\right\} \gets AD\left(\left\{\tilde{Z}^{m, i}_{t_j}\right\}, \left\{X^m_{t_j}\right\} \right)$; \Comment{Automatic differentiation}
    \For{$j =0 \to N-1$}
    \State $\tilde{Y}^{m,i}_{t_{j+1}} \gets \tilde{Y}^{m, i}_{t_j} - \left( f(t_j,X^m_{t_j},\tilde{Y}^{m, i}_{t_j},\tilde{Z}^{m, i}_{t_j}) + \partial_x^\ast \tilde{Z}^{m, i}_{t_j} \cdot D^\phi_{t_j} X^m_{t_j}\right) \Delta t+ \tilde{Z}^{m, i}_{t_j}\Delta B_{t_j}^H$;
    \EndFor
    \State Set loss function $\frac{1}{M} \sum_{m = 1}^{M}  \left\lvert g(X^m_T)-\tilde{Y}^{m, i}_T\right\rvert^2$;
    \State $\left(y^{m, i+1}_0, \theta^{i+1}\right) \gets Adam(loss;y^{m, i}_0, \theta^i)$;
    \EndFor
    \end{algorithmic}
\end{algorithm}

\subsection{More detail of the RNN-BSDE method and an example of solving fBSDEs}
In this section, we will introduce more about how to set up the neural network of the RNN-BSDE method to make the algorithm more practical. It's convenient to use 
the simplest traditional RNN to explain our main idea but it's too weak to solve some fBSDEs. So it's necessary to tell more detail and apply some more practical types of RNN.

Suppose the samples of $X_t$, $\widetilde{X_t} \in \mathbb{R}^{m \times (N+1) \times d}$, where $m \in \{m_1, m_2\}$, $m_1$ means the number of 
sample paths in the whole valid sets and we denote the mini-batch size by $m_2$, $N + 1$ means the number of time nodes, $d$ is the dimension
(which will be regarded as the number of features in deep learning) of inputs.
The stacked RNN to approximate $(Z_{t_n}, n = 0,1,\cdots,N)$ has at least four layers including one $d$-dimensional input layer, 
at least two $(d+10)$-dimensional hidden layers and one $d$-dimensional output layer. The hidden layers and output layer are all RNN layers composed of recurrent units, we set weight matrices 
$U_h \in \mathbb{R}^{d \times \left(d+10\right)}$ and $U_o \in \mathbb{R}^{d \times d}$, recurrent weight matrices $W_h \in \mathbb{R}^{\left(d+10\right) \times \left(d+10\right)}$ and $W_o \in \mathbb{R}^{\left(d+10\right) \times d}$
. There is no activation function after each matrix multiplication and instead we have
\begin{equation}\label{eqn27}
\begin{aligned}
&h^{(1)}_{t_n} = tanh\left(LN^{(1)}\left(X_{t_n}*U_h^{(1)}+ h_{t_{n-1}}^{(1)}*W_h^{(1)}\right) + b^{(1)} \right),\\ 
&h^{(2)}_{t_n} = tanh\left(LN^{(2)}\left(h_{t_n}^{(1)}*U_h^{(2)}+ h_{t_{n-1}}^{(2)}*W_h^{(2)}\right) + b^{(2)} \right),\\
&Z_{t_{n}} = LN^{(3)}\left(h^{(2)}_{t_{n}}*U_o+ Z_{t_{n-1}}*W_o\right) + b^{(3)} ,\quad n=0,1,\cdots,N-1
\end{aligned}  
\end{equation}
where $LN$ means layer normalization, $tanh$ is the hyperbolic tangent function. \eqref{eqn27} can be understood more easily with Fig.\ref{fig1} together.
All weights and parameters of layer normalization will be randomly initialised at the start of each run. 

If we worry that a stacked RNN is still not powerful 
enough to solve most fBSDEs, it will be the turn of a special type of RNN named Long Short Term Memory networks(LSTMs)\cite{LSTM}. For $H \in \left(\frac{1}{2}, 1 \right) $, the fBM $B_t^H$ holds the long memory property and as known a traditional RNN is not able to 
handle "long-term dependencies" in practice while the LSTM can keep useful with long-term dependencies and deal with the vanishing gradient problem in the RNN. Since LSTMs are a kind of
RNN, we can change an RNN into a LSTM just by replacing RNN units in the network with LSTM units which are the fundamental building blocks of a LSTM. A LSTM cell is composed of
a cell and three gates including an input gate, a forget gate and an output gate. In the RNN-BSDE method, we choose to use a LSTM with layer normalization which has the similar structure to 
a stacked-RNN illustrated in Fig.\ref{fig1}, i.e. multiple $(d+10)$-dimensional LSTM layers as hidden layers and one extra $d$-dimensional LSTM layer before output.

\subsection{Convergence analysis}
In this part, we provide a posterior estimate of the numerical solution and this posterior estimate justifies the convergence of RNN-BSDE method. Firstly, assume 
\begin{assumptions}
\label{asm1}
Let $u \in C^{1,2}(\mathbb{R}^+ \times \mathbb{R}^d)$, $u$ and the process $X_t$ satisfy the conditions in \ref{thmito} to 
ensure that Itô formula for fWIS integrals(Theorem.\ref{thmito}) is applicable to $u(t, X_t)$. 
\end{assumptions}
\begin{assumptions}
\label{asm2}
For any $y, y^{\prime}$,$x, x^{\prime}$ and $t, t^{\prime}$, 
\begin{equation*}
\left\lvert f(t,x,y,z) - f(t^{\prime},x^{\prime},y^{\prime},z)\right\rvert \le L_f \left(\left\lvert t-t^{\prime} \right\rvert + \left\lvert x-x^{\prime} \right\rvert + \left\lvert y-y^{\prime} \right\rvert\right),
\end{equation*}
where $L_f$ is a given positive constant.
\end{assumptions}
\begin{assumptions}
\label{asm3}
For any $z, z^{\prime} $ and any $t_i$, $t_j \in [0,T]$ satisfying $t_i \le t_j$,
\begin{equation*}
\int_{t_i}^{t_j}  \left\lvert f(t,x,y,z) - f(t,x,y,z^{\prime})\right\rvert^2  \,dx \le C_f E \left\lvert \int_{t_i}^{t_j} z-z^{\prime} \,dB_t^H\right\rvert^2,
\end{equation*}
where $C_f$ is a given positive constant and denote $C_f^\ast = C_f \vee L_f$.
\end{assumptions}
\begin{assumptions}
\label{asm5}
Assume \eqref{eqn10} has a unique classical solution, then there exist a unique pair of $\mathscr{F}_t^H$-adapted processes $\left(Y_t,Z_t\right)$ which solve \eqref{eqn3}. 
\end{assumptions}

Consider the following fFBSDE system with the state $\widetilde{Y}_t$  
\begin{equation}
\label{eqn39}
\begin{cases}
\begin{aligned}
&X_t = \xi + \int_{0}^{t} \mu(s,X_s) \,ds  + \int_{0}^{t} \sigma (s,X_s)\, d B_s^H,\\
&\widetilde{Y}_t = \widetilde{y} - \int_{0}^{t} f(s,X_s,\widetilde{Y}_s,\widetilde{Z}_s) ds + \int_{0}^{t} \widetilde{Z}_s \,d B_s^H,\\      
\end{aligned}
\end{cases}
\end{equation}
the aim is to minimize the objective functional
\begin{equation}
\label{eqn40}
J \left(\widetilde{y},\widetilde{Z}_\centerdot\right) = E\left\lvert g(X_T)-\widetilde{Y}_T\right\rvert ^2,    
\end{equation}
under the control $\left(\widetilde{y},\widetilde{Z}_\centerdot\right) \in \mathbb{R} \times \mathcal{L}(0,T)$.

Firstly, notice \eqref{eqn38}, and we provide 
\begin{theorem}
\label{thmct}
Assume Assumptions \ref{asm1}-\ref{asm5} hold. Let $(Y_t, Z_t)$ be the solution of fBSDE \eqref{eqn3}, $\widetilde{Y}_t$ denotes the state of 
system \eqref{eqn39} under the control $\left(\widetilde{y},\widetilde{Z}_\centerdot\right) \in \mathbb{R} \times \mathcal{L}(0,T)$, then there 
exists some constant $C$ which only depends on $T$ and $C_f^\ast$ satisfying
\begin{equation}
\label{eqn41}
\begin{aligned}
&\sup_{0 \le t \le T}E \left\lvert Y_t - \widetilde{Y}_t\right\rvert^2 + E \left\lvert \int_{0}^{T} Z_s - \widetilde{Z}_s \,dB_s^H\right\rvert^2\\
=&\sup_{0 \le t \le T}E \left\lvert Y_t - \widetilde{Y}_t\right\rvert^2 + E \left\{ \left(\int_{0}^{T} D^\phi_s \left(Z_s - \widetilde{Z}_s\right)  \,ds\right) ^2 + \left\lvert Z_s - \widetilde{Z}_s \right\rvert _\phi \right\} \\
\le&C E\left\lvert g(X_T)-\widetilde{Y}_T\right\rvert ^2 .
\end{aligned}
\end{equation}
\end{theorem}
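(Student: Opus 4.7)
The plan is to compare the two controlled systems through their differences. Set $\delta Y_t = Y_t - \widetilde{Y}_t$ and $\delta Z_t = Z_t - \widetilde{Z}_t$. Since both $Y$ and $\widetilde{Y}$ are driven by the same forward process $X_t$, subtracting the integral form of BSDE \eqref{eqn3} from the backward rewriting of the state equation for $\widetilde{Y}$ yields the identity $\delta Y_t = \delta Y_T + \int_t^T \Delta f(s)\,ds - \int_t^T \delta Z_s\,dB_s^H$, where $\Delta f(s) := f(s, X_s, Y_s, Z_s) - f(s, X_s, \widetilde{Y}_s, \widetilde{Z}_s)$ and $\delta Y_T = g(X_T) - \widetilde{Y}_T$ by the terminal condition $Y_T = g(X_T)$.

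Next I would square this identity, apply $(a+b+c)^2 \le 3(a^2+b^2+c^2)$, and take expectation to obtain the baseline bound $E|\delta Y_t|^2 \le 3 E|\delta Y_T|^2 + 3(T-t) \int_t^T E|\Delta f(s)|^2\,ds + 3 E|\int_t^T \delta Z_s\,dB_s^H|^2$. I would split $|\Delta f|^2 \le 2L_f^2 |\delta Y|^2 + 2|f(s,X,\widetilde{Y},Z) - f(s,X,\widetilde{Y},\widetilde{Z})|^2$: the first piece is handled by Assumption \ref{asm2}, and the second, once integrated in $s$, is handled by Assumption \ref{asm3} and bounded by $2 C_f E|\int_t^T \delta Z\,dB^H|^2$. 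Rearranging the same identity as $\int_t^T \delta Z\,dB^H = \delta Y_t - \delta Y_T + \int_t^T \Delta f\,ds$ and squaring yields a symmetric bound for $E|\int_t^T \delta Z\,dB^H|^2$ in terms of $E|\delta Y_t|^2$, $E|\delta Y_T|^2$, and $\int_t^T E|\delta Y|^2\,ds$. By the fWIS isometry \eqref{eqn38}, $E|\int \delta Z\,dB^H|^2 = E[(\int D^\phi \delta Z\,ds)^2 + |\delta Z|_\phi^2]$ automatically, so the two terms on the left-hand side of the claimed inequality do not have to be analyzed separately.

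Combining the two estimates linearly and absorbing the cross $E|\int \delta Z\,dB^H|^2$-term into the left-hand side (which requires a smallness assumption such as $TC_f^\ast$ sufficiently small; if that fails globally, one first partitions $[0,T]$ into finitely many short subintervals where smallness does hold and iterates backward in time) produces an inequality of the form $E|\delta Y_t|^2 + E|\int_t^T \delta Z\,dB^H|^2 \le C_1 E|\delta Y_T|^2 + C_2 \int_t^T E|\delta Y_s|^2\,ds$. Gronwall's lemma then gives $\sup_{0\le t\le T} E|\delta Y_t|^2 \le C E|\delta Y_T|^2$, and substituting back closes the loop and yields a constant $C$ depending only on $T$ and $C_f^\ast$.

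The main obstacle is that the fWIS integral is not a martingale, so the conditional orthogonality $E[\delta Y_t \int_t^T \delta Z\,dB^H] = 0$ underlying the classical BSDE a priori estimate is unavailable here. This induces a genuine two-way coupling between the $E|\delta Y_t|^2$ bound and the $E|\int_t^T \delta Z\,dB^H|^2$ bound: neither can be controlled without invoking the other, and closing the loop produces a smallness constraint of the form $6 T C_f < 1$ in the bookkeeping (which a piecewise-in-time iteration can bypass). An alternative route would be to apply the Itô formula of Theorem \ref{thmito} directly to $\delta Y_t^2$, which gives a cleaner energy identity at the cost of an extra $\int \delta Z_s\, D_s^\phi \delta Y_s\,ds$ term that must then be controlled through the Malliavin-derivative representation of Lemma \ref{lemma3.2}.
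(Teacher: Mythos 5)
You correctly identify the central obstacle — the fWIS integral is not a martingale, so the orthogonality $E\bigl[\delta Y_t\int_t^T\delta Z_s\,dB_s^H\bigr]=0$ that drives the classical a priori estimate is unavailable — but the fix you build your main argument on does not work. Writing $A=E|\delta Y_t|^2$, $B=E\bigl|\int_t^T\delta Z_s\,dB_s^H\bigr|^2$, $D=E|\delta Y_T|^2$, $I=\int_t^T E|\delta Y_s|^2ds$, your two squared estimates read $A\le 3D+6TL_f^2I+(3+6TC_f)B$ and $B\le 3D+3A+6TL_f^2I+6TC_fB$. Even under $6TC_f<1$, substituting the second into the first puts $A$ on the right-hand side with coefficient $3(3+6TC_f)/(1-6TC_f)\ge 9$, and no linear combination $A+\lambda B$ fares better: absorbing the $B$-term forces $\lambda\ge 3$, which makes the $3\lambda A$-term unabsorbable. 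The crucial point is that these offending constants come from the elementary inequality $(a+b+c)^2\le 3(a^2+b^2+c^2)$ (and cannot be pushed below $1$ by any Young-type splitting), so they do not shrink as the interval length shrinks; consequently neither "$TC_f^\ast$ small" nor your fallback of partitioning $[0,T]$ and iterating backward closes the loop. Structurally, the single scalar identity $\delta Y_t=\delta Y_T+\int_t^T\Delta f\,ds-\int_t^T\delta Z_s\,dB_s^H$ cannot by itself exclude the scenario in which $\delta Y_t$ and $\int_t^T\delta Z_s\,dB_s^H$ are both large but nearly cancel, leaving $D$ and $\Delta f$ small; ruling that out requires genuine probabilistic input beyond the triangle inequality.

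That missing input is exactly what the paper's route supplies. The paper proves Theorem \ref{thmct} by transplanting the argument of Theorem 2.1 of the cited reference, replacing the classical Itô formula by the Wick–Itô formula of Theorem \ref{thmito}: one applies it to $|\delta Y_t|^2$, uses the zero-expectation property of fWIS integrals (in place of the martingale orthogonality) together with the isometry \eqref{eqn38} — which is also why the equality in \eqref{eqn41} holds — and the extra $D^\phi$-correction term, then Assumptions \ref{asm2}–\ref{asm3}, Young's inequality and Gronwall. You mention precisely this as an "alternative route" in your final sentence, including the extra $\int\delta Z_s\,D_s^\phi\delta Y_s\,ds$-type term to be controlled, but you leave it undeveloped; it is not an alternative but the only viable path here, and your main argument as written has a genuine gap.
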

The proof of Theorem \ref{thmct} is an analogue of Theorem 2.1 in \cite{Jiang_2021}, only note that Itô formula should be replaced by 
the Itô formula for Wick integration.

In view of \ref{thmct}, the problem of solving \eqref{eqn3} can be changed into the stochastic control problem of the system \eqref{eqn39}. So it will be
a reasonable choice to apply deep learning to this kind of problems.

Then, we need to provide the estimation of the error resulted from time discretization. From now on, we manily consider $d = 1$ for brevity. 

$C^{\gamma}([0,T])$ denotes a $\gamma$-H$\mathrm{\ddot{o}}$lder space, $\left\lVert \cdot \right\rVert _\gamma$ denotes the $\gamma$-H$\mathrm{\ddot{o}}$lder norm.

For any constant $K > 0$, define 
\begin{equation*}
\mathcal{A}_{K} := \left\{Z \in \mathcal{L}(0,T)| Z(\cdot, \omega) \in C^{\frac{1}{2}}([0,T]), E\left(\left\lVert Z \right\rVert _{\frac{1}{2}}^4\right)   \le K \right\}.
\end{equation*}

Let $\widehat{Y}_t$ be the state of 
\begin{equation}
\label{eqn43}
\begin{cases}
\begin{aligned}
&\widehat{X}_{t_{k+1}} = \widehat{X}_{t_k} + \mu(t_k,\widehat{X}_{t_k}) \Delta t  + \sigma (t_k,\widehat{X}_{t_k})\,\diamond \Delta B^H_{t_k},\\
&\widehat{Y}_{t_{k+1}} = \widehat{Y}_{t_k} - f(t_k,\widehat{X}_{t_k},\widehat{Y}_{t_k},\widetilde{Z}_{t_k})\left(t_{k+1}-t_k\right) + \widetilde{Z}_{t_k} \diamond \Delta B^H_{t_k},
\end{aligned}
\end{cases}
\end{equation}
with the aim of minimizing the objective functional
\begin{equation}
\label{eqn44}
\widehat{J} \left(\widetilde{y},\widetilde{Z}_\centerdot\right) = E\left\lvert g(X_T)-\widehat{Y}_T\right\rvert ^2,    
\end{equation}
under the control $\left(\widetilde{y},\widetilde{Z}_\centerdot\right) \in \mathbb{R} \times \mathcal{A}_{K}$. For any partition $\pi$, define

\begin{lemma}
\label{lemfinal}
Assume $\widetilde{Z} \in \mathcal{A}_{K}$, then let $N$ large enough, for any partition $\pi$ and $k = 0,\dots,N-1$, it follows that
\begin{equation}
E \left\lvert \int_{t_k}^{t_{k+1}} \widetilde{Z}_t \,dB_t^H - \widetilde{Z}_{t_k} \diamond \Delta B^H_{t_k} \right\rvert^2 \le C_{K} \left[\sqrt{E \left(\left\lVert B^H \right\rVert^4 _{C^{H-\delta}}\right)} \left(\Delta t\right)^{1 + 2H-2\delta}+\Delta t^2  \right] ,
\end{equation}
moreover, 
\begin{equation}
\sup_{t_k \le t \le t_{k+1}} E\left\lvert \widetilde{Y}_t-\widehat{Y}_{t_k} \right\rvert^2 \le C_{K} \left[\sqrt{E \left(\left\lVert B^H \right\rVert^4 _{C^{H-\delta}}\right)} \left(\Delta t\right)^{1 + 2H-2\delta}+\Delta t^2  \right],
\end{equation}
especially, 
\begin{equation}
E\left\lvert \widetilde{Y}_T-\widehat{Y}_{T} \right\rvert^2 \le C_{K} \left[\sqrt{E \left(\left\lVert B^H \right\rVert^4 _{C^{H-\delta}}\right)} \left(\Delta t\right)^{1 + 2H-2\delta}+\Delta t^2  \right],
\end{equation}
with some $C_{K}$ not depending on $B^H$, $\delta \in \left(0, H-\frac{1}{2}\right) $.
\end{lemma}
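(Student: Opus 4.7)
The proof naturally splits into the three inequalities, and the main technical engine is already set up by Theorem \ref{thmtrans}, which converts fWIS integrals into pathwise (Stratonovich) integrals plus a $\phi$-derivative correction. For the \textbf{first estimate}, the plan is to write
\[
\int_{t_k}^{t_{k+1}} \widetilde{Z}_t \, dB_t^H - \widetilde{Z}_{t_k} \diamond \Delta B^H_{t_k}
= \int_{t_k}^{t_{k+1}} (\widetilde{Z}_t-\widetilde{Z}_{t_k}) \, \delta B^H_t
- \int_{t_k}^{t_{k+1}} D^\phi_s(\widetilde{Z}_s-\widetilde{Z}_{t_k}) \, ds.
\]
Because $\widetilde{Z}\in \mathcal{A}_{K}$ has $\frac{1}{2}$-H\"older paths and $B^H\in C^{H-\delta}$ with $\frac{1}{2}+H-\delta>1$ for $\delta\in(0,H-\frac{1}{2})$, the first piece is a Young integral whose pathwise bound is $C\|\widetilde{Z}\|_{\frac{1}{2}}\|B^H\|_{C^{H-\delta}}(\Delta t)^{\frac{1}{2}+H-\delta}$. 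Squaring, taking expectation, and using Cauchy--Schwarz with the defining estimate $E\|\widetilde{Z}\|_{\frac{1}{2}}^4\le K$ produces the $\sqrt{E\|B^H\|_{C^{H-\delta}}^4}\,(\Delta t)^{1+2H-2\delta}$ contribution. The $\phi$-derivative correction, being an integral of a quantity that vanishes at $s=t_k$ and is $\frac{1}{2}$-H\"older in $s$, is immediately of order $(\Delta t)^2$ in the $L^2$ norm, producing the remaining $\Delta t^2$ term.

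For the \textbf{second estimate}, I would subtract the Euler step \eqref{eqn43} from the continuous dynamics \eqref{eqn39} to get, for $t\in[t_k,t_{k+1}]$,
\[
\widetilde{Y}_t-\widehat{Y}_{t_k} = \bigl(\widetilde{Y}_{t_k}-\widehat{Y}_{t_k}\bigr)
- \int_{t_k}^{t}\!\bigl[f(s,X_s,\widetilde{Y}_s,\widetilde{Z}_s)-f(t_k,\widehat{X}_{t_k},\widehat{Y}_{t_k},\widetilde{Z}_{t_k})\bigr]\,ds
+ \int_{t_k}^{t}\widetilde{Z}_s\,dB_s^H.
\]
The stochastic term is controlled on $[t_k,t]$ by the first estimate applied on the sub-interval, augmented by $E|\int_{t_k}^{t}\widetilde{Z}_{t_k}\,dB_s^H|^2 = O(\Delta t^{2H})$ coming from \eqref{eqn38}. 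The drift difference is controlled through Assumption \ref{asm2} (Lipschitz in $(t,x,y)$) together with a standard Wick--Euler bound $E|X_t-\widehat{X}_{t_k}|^2 = O(\Delta t^{2H-2\delta})$, which itself is obtained by exactly the same Young/$\phi$-derivative decomposition applied to the forward SDE. Gronwall's inequality on $[t_k,t_{k+1}]$ then yields the claimed $\sup_t$-bound.

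The \textbf{third estimate} follows from the same one-step argument carried across all intervals. Writing the recursion
\[
\widetilde{Y}_{t_{k+1}}-\widehat{Y}_{t_{k+1}} = \bigl(\widetilde{Y}_{t_k}-\widehat{Y}_{t_k}\bigr) + (\text{drift diff on }[t_k,t_{k+1}]) + \Bigl(\int_{t_k}^{t_{k+1}}\widetilde{Z}_s\,dB_s^H - \widetilde{Z}_{t_k}\diamond \Delta B^H_{t_k}\Bigr),
\]
taking $E|\cdot|^2$, using Assumption \ref{asm3} to handle the $\widetilde{Z}$-contribution, and invoking discrete Gronwall on the resulting scheme absorbs the accumulated one-step errors into a constant depending only on $T$ and $C_f^\ast$, hidden inside $C_K$.

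\textbf{Main obstacle.} The trickiest step is the estimate of the pathwise Young integral against $B^H$: the class $\mathcal{A}_{K}$ only specifies the $\frac{1}{2}$-H\"older seminorm of $\widetilde{Z}$ and its $L^4$ bound, so the derivation implicitly requires that the $\phi$-derivative $D^\phi_s \widetilde{Z}_s$ inherits enough regularity from the H\"older assumption (in the spirit of Proposition \ref{pro2}, where $D^\phi_s X_t$ is controlled by the pathwise structure of $X$). Establishing this compatibility between the H\"older class $\mathcal{A}_{K}$ and the Malliavin-type estimates is the most delicate ingredient; once it is in place, all three inequalities follow from the Young-integral/Gronwall scheme above.
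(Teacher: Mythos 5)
Your overall strategy coincides with the paper's: the first estimate is obtained there exactly as you propose, by setting $Z^\ast_t=\widetilde Z_t-\widetilde Z_{t_k}$, invoking the pathwise Young-type bound of Lemma 19 in \cite{1999On}, $\bigl\lvert\int_{t_k}^{t_{k+1}}Z^\ast_t\,\delta B^H_t\bigr\rvert\le C\lVert Z^\ast\rVert_{\frac12}\lVert B^H\rVert_{C^{H-\delta}}(\Delta t)^{\frac12+H-\delta}$, passing from $\delta B^H$ to $dB^H$ via Theorem \ref{thmtrans} together with \eqref{eqn38}, using Cauchy--Schwarz against $E\lVert\widetilde Z\rVert_{\frac12}^4\le K$, and bounding the $D^\phi$ correction by $C\Delta t^2$; the terminal estimate is then closed by the discrete recursion $a_{k+1}\le(1+C\Delta t)(a_k+b_0)$, $a_1\le(1+C\Delta t)b_0$. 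The obstacle you flag about the regularity of $D^\phi\widetilde Z$ is handled no more carefully in the paper (the term $\sup_t E\lvert D^\phi_t(Z_t-Z_{t_k})\rvert^2\Delta t^2$ is simply absorbed into $C_K\Delta t^2$), so your concern is consistent with, not an improvement over, the printed argument.

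The genuine gap is in your second estimate. You split the stochastic part of $\widetilde Y_t-\widehat Y_{t_k}$ into the one-step quadrature error plus the raw increment and then "augment" the bound by $E\bigl\lvert\int_{t_k}^{t}\widetilde Z_{t_k}\,dB^H_s\bigr\rvert^2=O\bigl((\Delta t)^{2H}\bigr)$. But for $\delta\in\left(0,H-\frac12\right)$ one has $1+2H-2\delta>2>2H$, so $(\Delta t)^{2H}$ strictly dominates both $(\Delta t)^{1+2H-2\delta}$ and $(\Delta t)^2$ as $\Delta t\to0$; a term of that size cannot be absorbed into the claimed right-hand side, and neither a continuous nor a discrete Gronwall step will remove it, so your argument as written does not deliver the stated inequality. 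The paper's one-step inequality \eqref{eqn45} is organized precisely so that this raw increment never appears: the continuous state is compared with the discrete state whose update already contains $\widetilde Z_{t_k}\diamond\Delta B^H_{t_k}$, hence the only stochastic contribution entering the recursion is the quadrature error $E\bigl\lvert\int_{t_k}^{t_{k+1}}\widetilde Z_t\,dB^H_t-\widetilde Z_{t_k}\diamond\Delta B^H_{t_k}\bigr\rvert^2\le b_0$ furnished by the first estimate, after which everything is propagated through the $a_k$ recursion. Also note that your auxiliary Euler bound $E\lvert X_t-\widehat X_{t_k}\rvert^2=O\bigl((\Delta t)^{2H-2\delta}\bigr)$ is an extra ingredient the paper neither proves nor uses; it keeps the term $C\Delta t\int_{t_k}^{t_{k+1}}E\lvert X_s-\widehat X_{t_k}\rvert^2\,ds$ and absorbs it directly. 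To match the lemma you should therefore drop the $O\bigl((\Delta t)^{2H}\bigr)$ term from the bookkeeping and set up the comparison against the Wick--Euler update as the paper does.
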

\begin{proof}
Define $Z_t^\ast := \widetilde{Z}_t- \widetilde{Z}_{t_k}$ for $t \in \left[t_k, t_{k+1}\right] $. In view of Lemma 19 in \cite{1999On}, it follows that
\begin{equation}
\left\lvert \int_{t_k}^{t_{k+1}} Z_t^\ast \,\delta B_t^H \right\rvert \le C \left\lVert Z_t^\ast \right\rVert _{\frac{1}{2}} \left\lVert B^H \right\rVert _{C^{H-\delta}}\left(\Delta t\right)^{\frac{1}{2}+H-\delta}
\end{equation}
where $\frac{1}{2}+H-\delta > 1$. Then
\begin{equation}
E \left\lvert \int_{t_k}^{t_{k+1}} \widetilde{Z}_t \,\delta B_t^H - \widetilde{Z}_{t_k} \diamond \Delta B^H_{t_k} \right\rvert^2 \le C_{K} \left[\sqrt{E \left(\left\lVert B^H \right\rVert^4 _{C^{H-\delta}}\right)} \left(\Delta t\right)^{1 + 2H-2\delta}\right].
\end{equation}
In view of Theorem \ref{thmtrans} and \eqref{eqn38}, for $N$ large enough,
\begin{equation}
\begin{aligned}
E \left\lvert \int_{t_k}^{t_{k+1}} Z_t^\ast \,d B_t^H \right\rvert^2 &\le E \left\lvert \int_{t_k}^{t_{k+1}} Z_t^\ast \,\delta B_t^H \right\rvert ^2 + E\left\{\left(\int_{t_k}^{t_{k+1}} D^\phi_s Z_s^\ast\,ds\right)^2\right\}\\
&\le C_{K} \left[\sqrt{E \left(\left\lVert B^H \right\rVert^4 _{C^{H-\delta}}\right)} \left(\Delta t\right)^{1 + 2H-2\delta}\right] \\
&+ \sup_{t_k \le t \le t_{k+1}}E\left\lvert D^\phi_t \left(Z_t - Z_{t_k}\right) \right\rvert^2 \Delta t^2\\
&\le C_{K} \left[\sqrt{E \left(\left\lVert B^H \right\rVert^4 _{C^{H-\delta}}\right)} \left(\Delta t\right)^{1 + 2H-2\delta}+\Delta t^2  \right].     
\end{aligned}
\end{equation}

let $b_0$ denote $C_{K} \left[\sqrt{E \left(\left\lVert B^H \right\rVert^4 _{C^{H-\delta}}\right)} \left(\Delta t\right)^{1 + 2H-2\delta}+\Delta t^2  \right]$.
Consider \eqref{eqn43} and \eqref{eqn39}, for any $t \in [t_{k+1}, t_{k+2}]$ we have
\begin{equation}
\label{eqn45}
\begin{aligned}
E\left\lvert \widetilde{Y}_t - \widehat{Y}_{t_{k+1}} \right\rvert^2 \le& \left(1 + \Delta t\right) E\left\lvert \widetilde{Y}_{t_k} - \widehat{Y}_{t_k} \right\rvert^2  \\
+&\left( \Delta t + \Delta t^2  \right) \int_{t_k}^{t_{k+1}} E \left\lvert f(s,X_s,\widetilde{Y}_s,\widetilde{Z}_s) -  f(t_k,\widehat{X}_{t_k},\widehat{Y}_{t_k},\widetilde{Z}_{t_k})\right\rvert^2 \,ds \\
+&E \left\lvert \int_{t_k}^{t_{k+1}} \widetilde{Z}_t \,dB_t^H - \widetilde{Z}_{t_k} \diamond \Delta B^H_{t_k} \right\rvert^2\\
\le& \left(1 + \Delta t\right) E\left\lvert \widetilde{Y}_{t_k} - \widehat{Y}_{t_k} \right\rvert^2 \\
+& C\Delta t \int_{t_k}^{t_{k+1}}E\left(\left\lvert X_s-\widehat{X}_{t_k} \right\rvert^2 + \left\lvert \widetilde{Y}_s-\widehat{Y}_{t_k} \right\rvert^2\right)  \,ds\\
+& \left(1 + C \Delta t \right)  E \left\lvert \int_{t_k}^{t_{k+1}} \widetilde{Z}_t \,dB_t^H - \widetilde{Z}_{t_k} \diamond \Delta B^H_{t_k} \right\rvert^2\\
\le& \left(1+ C\Delta t\right) \sup_{t_k \le t \le t_{k+1}}E\left\lvert \widetilde{Y}_{t} - \widehat{Y}_{t_k}\right\rvert^2 \\
+& C\Delta t \int_{t_k}^{t_{k+1}}E\left(\left\lvert X_s-\widehat{X}_{t_k} \right\rvert^2 \right)  \,ds+ \left(1+ C\Delta t\right)b_0,
\end{aligned}
\end{equation}
since we let $N$ large enough, and apply supremum to \eqref{eqn45} 
\begin{equation}
\sup_{t_{k+1} \le t \le t_{k+2}} E\left\lvert \widetilde{Y}_t - \widehat{Y}_{t_{k+1}} \right\rvert^2 \le \left(1+ C\Delta t\right)\left(\sup_{t_k \le t \le t_{k+1}} E\left\lvert \widetilde{Y}_t - \widehat{Y}_{t_{k}} \right\rvert^2 + b_0 \right),
\end{equation}
especially, we have
\begin{equation}
E\left\lvert \widetilde{Y}_T - \widehat{Y}_{T} \right\rvert^2 \le \left(1+ C\Delta t\right)\left(\sup_{t_{N-1} \le t \le T} E\left\lvert \widetilde{Y}_{t} - \widehat{Y}_{t_{N-1}} \right\rvert^2 + b_0 \right),    
\end{equation}

Define $a_k := \sup_{t_{k-1} \le t \le t_{k}} E\left\lvert \widetilde{Y}_t - \widehat{Y}_{t_{k-1}} \right\rvert^2$, $k = 1,\dots,N$.
Notice $\widetilde{Y}_0 = \widehat{Y}_{t_{0}} = \widetilde{y}$, instantly we have $a_1 = (1 + C \Delta t)b_0$, then
\begin{equation}
\label{eqn46}
\begin{cases}
\begin{aligned}
&a_k \le \left(1+C\Delta t\right) \left(a_{k-1} + b_0\right),\\
&a_1 \le (1 + C \Delta t)b_0,
\end{aligned}    
\end{cases}    
\end{equation}
in view of \eqref{eqn46}, we have
\begin{equation}
\label{eqn47}
a_N \le \sum_{k = 1}^{N} \left(1+C\Delta t\right)^{k} b_0 \le C_{K} \left[\sqrt{E \left(\left\lVert B^H \right\rVert^4 _{C^{H-\delta}}\right)} \left(\Delta t\right)^{1 + 2H-2\delta}+\Delta t^2  \right],
\end{equation}
and $a_k$, $k = 1,\dots,N-1$, $E\left\lvert \widetilde{Y}_T - \widehat{Y}_{T} \right\rvert^2$ also hold, the proof is finished.
\end{proof}

Finally, we can give
\begin{theorem}
\label{thmtd}
Assume Assumptions \ref{asm1}-\ref{asm5} hold. Let $(Y_t, Z_t)$ be the solution of fBSDE \eqref{eqn3}, $\widehat{Y}_t$ denotes the state of 
system \eqref{eqn43} under the control $\left(\widetilde{y},\widetilde{Z}_\centerdot\right) \in \mathbb{R} \times \mathcal{A}_{K}$, then for
$N$ large enough, there exist some constants $C$ which only depends on $T$ and $C_f^\ast$ and $C_K$ which depends on $T$, $C_f^\ast$ and $K$ satisfying
\begin{equation}
\label{eqn48}
\begin{aligned}
&\max_{0 \le k \le N}\sup_{t_{k-1} \le t \le t_{k}}E \left\lvert Y_t - \widehat{Y}_{t_{k-1}} \right\rvert^2 + E \left\lvert \int_{0}^{T} Z_s - \widetilde{Z}_s \,dB_s^H\right\rvert^2\\
=&\max_{0 \le k \le N}\sup_{t_{k-1} \le t \le t_{k}}E \left\lvert Y_t - \widehat{Y}_{t_{k-1}} \right\rvert^2 + E \left\{ \left(\int_{0}^{T} D^\phi_s \left(Z_s - \widetilde{Z}_s\right)  \,ds\right) ^2 + \left\lvert Z_s - \widetilde{Z}_s \right\rvert _\phi \right\} \\
\le&C E\left\lvert g(X_T)-\widehat{Y}_{T}\right\rvert ^2 + C_{K} \left[\sqrt{E \left(\left\lVert B^H \right\rVert^4 _{C^{H-\delta}}\right)} \left(\Delta t\right)^{1 + 2H-2\delta}+\Delta t^2  \right],
\end{aligned}
\end{equation}
where $\delta \in \left(0, H-\frac{1}{2}\right)$.
\end{theorem}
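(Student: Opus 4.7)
The plan is to derive Theorem \ref{thmtd} by combining the continuous-time stability estimate of Theorem \ref{thmct} with the one-step time-discretization estimates of Lemma \ref{lemfinal} through a careful triangle-inequality argument. The left-hand side of \eqref{eqn48} contains the \emph{discretized} error $E|Y_t-\widehat{Y}_{t_{k-1}}|^2$, while the right-hand side contains only the \emph{discretized} objective $E|g(X_T)-\widehat{Y}_T|^2$; the intermediate state $\widetilde{Y}_t$ is the bridge between the two.

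First I would decompose the $Y$-error as
\begin{equation*}
E|Y_t-\widehat{Y}_{t_{k-1}}|^2 \le 2 E|Y_t-\widetilde{Y}_t|^2 + 2 E|\widetilde{Y}_t-\widehat{Y}_{t_{k-1}}|^2, \qquad t\in[t_{k-1},t_k].
\end{equation*}
Taking the supremum over $t\in[t_{k-1},t_k]$ and then the maximum over $k$, the first term is controlled by $\sup_{0\le t\le T}E|Y_t-\widetilde{Y}_t|^2$ (hence by Theorem \ref{thmct}), and the second term is controlled by Lemma \ref{lemfinal}. For the stochastic-integral term I would use Theorem \ref{thmct} directly, since both representations in \eqref{eqn48} are already equivalent via the isometry \eqref{eqn38}, no extra discretization bookkeeping is required there.

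The next step, which is the only place that needs some care, is to replace the $\widetilde{Y}_T$ appearing in the bound $CE|g(X_T)-\widetilde{Y}_T|^2$ from Theorem \ref{thmct} by the $\widehat{Y}_T$ that the numerical method actually controls. I would use the triangle inequality
\begin{equation*}
E|g(X_T)-\widetilde{Y}_T|^2 \le 2E|g(X_T)-\widehat{Y}_T|^2 + 2E|\widehat{Y}_T-\widetilde{Y}_T|^2,
\end{equation*}
and apply the last bound of Lemma \ref{lemfinal} (taken at $k=N$) to the second summand. Combining this with the previous step, absorbing the constants into $C$ and $C_K$ respectively, yields \eqref{eqn48}.

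The main obstacle I expect is not in any single estimate (each follows from the already-established results) but in making sure the bounds line up with the two equivalent forms written on the left-hand side of \eqref{eqn48}: the second line uses the $\phi$-isometry identity \eqref{eqn38}, so I would invoke it once explicitly, noting that $Z_s-\widetilde{Z}_s$ still belongs to $\mathcal{L}(0,T)$ because both $Z\in\mathcal{L}(0,T)$ (Assumption \ref{asm5}) and $\widetilde{Z}\in\mathcal{A}_K\subset\mathcal{L}(0,T)$. A secondary technical point is that $N$ must be chosen large enough for Lemma \ref{lemfinal} to apply and for $(1+C\Delta t)^N$ factors hidden in its recursion to be absorbed into the generic constants; this is guaranteed by the hypothesis ``for $N$ large enough'' in the statement. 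With these pieces in place the conclusion follows by a two-line combination, so the proof reduces essentially to a triangle-inequality bookkeeping exercise built on Theorem \ref{thmct} and Lemma \ref{lemfinal}.
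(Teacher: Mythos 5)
Your proposal is correct and follows essentially the same route as the paper's own proof: insert the intermediate state $\widetilde{Y}$, apply Theorem \ref{thmct} to the continuous-time error and the stochastic-integral term, then use the triangle inequality at $T$ together with Lemma \ref{lemfinal} to replace $\widetilde{Y}_T$ by $\widehat{Y}_T$ and to bound the discretization error, absorbing constants into $C$ and $C_K$. Your explicit remarks on the factor-of-two in the triangle inequalities and on invoking the isometry \eqref{eqn38} for the equality line are, if anything, slightly more careful than the paper's write-up.
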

\begin{proof}
In view of Theorem \ref{thmct} and Lemma \ref{lemfinal},
\begin{equation}
\begin{aligned}
&\max_{0 \le k \le N}\sup_{t_{k-1} \le t \le t_{k}}E \left\lvert Y_t - \widehat{Y}_{t_{k-1}} \right\rvert^2 + E \left\lvert \int_{0}^{T} Z_s - \widetilde{Z}_s \,dB_s^H\right\rvert^2\\
\le&\sup_{0 \le t \le T}E \left\lvert Y_t - \widetilde{Y}_t\right\rvert^2 + \max_{0 \le k \le N}\sup_{t_{k-1} \le t \le t_{k}}E \left\lvert \widetilde{Y}_t - \widehat{Y}_{t_{k-1}} \right\rvert^2+ E \left\lvert \int_{0}^{T} Z_s - \widetilde{Z}_s \,dB_s^H\right\rvert^2\\
\le&C E\left\lvert g(X_T)-\widetilde{Y}_T\right\rvert ^2  + \max_{0 \le k \le N}\sup_{t_{k-1} \le t \le t_{k}}E \left\lvert \widetilde{Y}_t - \widehat{Y}_{t_{k-1}} \right\rvert^2\\
\le& C E\left\lvert g(X_T)-\widehat{Y}_T\right\rvert ^2 + C E\left\lvert \widetilde{Y}_T-\widehat{Y}_T\right\rvert ^2 + \max_{0 \le k \le N}\sup_{t_{k-1} \le t \le t_{k}}E \left\lvert \widetilde{Y}_t - \widehat{Y}_{t_{k-1}} \right\rvert^2\\
\le&C E\left\lvert g(X_T)-\widehat{Y}_{T}\right\rvert ^2 + C_{K} \left[\sqrt{E \left(\left\lVert B^H \right\rVert^4 _{C^{H-\delta}}\right)} \left(\Delta t\right)^{1 + 2H-2\delta}+\Delta t^2  \right].
\end{aligned}    
\end{equation}
\end{proof}


\section{Numerical examples}
\label{sect5}
In this section, We will introduce some experiments to verify whether RNN-BSDE method works well on fractional BSDEs. We mainly apply the RNN-BSDE algorithm based on a multi-layer LSTM to our experiments, which we refer to as LSTM-BSDE for brevity. 
In addition, we refer to the RNN-BSDE algorithm based on a stacked RNN as mRNN-BSDE.
\subsection{Fractional Black-Scholes equation}
In this subsection, we consider the extension of the famous Black-Scholes equation\cite{1973bs} which is widely applied in the field of finance.
In view of \eqref{cor1}, the fractional Black-scholes equation in the case of $d=1$ has the form of 
\begin{equation}
\label{eqn33}
\begin{cases}
\begin{aligned}
&\frac{\partial u(t,x)}{\partial t} + \sigma^2 H x^2 t^{2H-1} \frac{\partial^2 u(t,x)}{\partial x^2}+ r x \frac{\partial u(t,x)}{\partial x} - ru(t,x) =0,\\
&u(T,x)=g(x),\\   
\end{aligned}
\end{cases}
\end{equation}
where $r$ is a constant known as the interest rate. And if $H = \frac{1}{2}$, \eqref{eqn33} will be exactly the famous standard Black-Scholes equation.

Adopt $g(x) = \max{\left\{x-K, 0 \right\}}$, To solve \eqref{eqn33} is the equivalent of solving the pricing problem of European call option.
It is not a difficult thing and just similar to what to do to solve the standard Black-Scholes equation in the $1-d$ case. By means of variable substitution, 
change \eqref{eqn33} into a typical heat equation, and it can be verified that the solution of \eqref{eqn33} is 
\begin{equation}
\label{eqn34}
u(t,x) = xN(d_1) - Ke^{-r(T-t)}N(d_2),
\end{equation}
where $N(t) = \frac{1}{\sqrt{2\pi}}\int_{-\infty}^{t}e^{-\frac{s^2}{2}} \,ds $ is the normal distribution function and
\begin{equation*}
\begin{aligned}
\eta &= \frac{\ln{\frac{x}{K}} + r(T-t)}{\sigma \sqrt{T^{2H}-t^{2H}}},\\
d_1 &= \eta + \frac{\sigma}{2}\sqrt{T^{2H}-t^{2H}},\\
d_2 &= \eta - \frac{\sigma}{2}\sqrt{T^{2H}-t^{2H}}.\\
\end{aligned}
\end{equation*}

Our goal is to approximate $u(0,x_0)$, $x_0 = (x_0^{(1)},\dots, x_0^{d}) \in \mathbb{R}^d$ by the deep learning method and compare the LSTM-BSDE method with other methods
designed for solving high-dimensional PDEs and SDEs to verify whether RNN-BSDE method works well on fractional BSDEs. There is some common setting for LSTM-BSDE method.
The multi-layer LSTM set in the LSTM-BSDE consists of one input layer, two hidden layers and one output layer. The input layer is $d$-dimensional , the two hidden layers
are $(d + 10)$-dimensional, the output layer is $d$-dimensional. In every hidden layer and output layer, Xavier initialisation\cite{2010Understanding} is used to initialise weights of inputs, 
orthogonal initialisation is used to initialise weights of recurrent connections, the biases are initialised to zero (these are exactly the default setting in Keras for LSTM units).
The normal initialisation and the uniform initialisation are used to initialise $\beta$ and $\gamma$ of layer normalization. The layer normalization is applied before all the activation functions in
the LSTM units of all hidden layers, and before the output layer.

The setting for the stacked RNN used in the RNN-BSDE method is the same as what we set for LSTM-BSDE. The methods used to compare with LSTM-BSDE that we choose are deep splitting method\cite{Beck_2021} and DBDP1 method\cite{Cme2020Deep}. The neural networks of these methods are 
FNN-based if without any extra description. And we set these FNNs as same as the one for the Deep BSDE method described in \cite{2017Deep}.

As for the optimizer, we choose Adam\cite{2014Adam} for all, which is effective as known and checked by our experiment results.
\subsubsection{Results in the one-dimension case($d=1$)}
Set the dimension $d=1$, $0 = t_0 < t_1 < \dots < t_N = T$, $T = 0.5$, $N = 20$, then $\Delta t = T/N = 0.025$. For the parameters of \eqref{eqn29}, \eqref{eqn33} and \eqref{eqn34}, 
$\mu = 0.06$, $\sigma = 0.2$, $x_0 = 100$, $r = 0.06$, $K = 100$. Learning rate $lr = 0.005$, the valid set size $m_1 = 256$ and the mini-batch size $m_2 = 64$. To approximate $u(0,x_0)$,
there will be 5 independent runs for each of the methods.

For comparison, firstly, we consider a trival case, where $H = \frac{1}{2}$, i.e. \eqref{eqn29} is a standard SDE driven by the Brownian motion $B_t$ and 
the explicit solution $u(0, x_0)$ in \eqref{eqn34} is around 7.1559. It is not surprising to observe that in Fig.\ref{fig3} the results of $\widetilde{u}(0, x_0)$ from all algorithms are close to the true value 
and these methods using FNNs except deep splitting have a little better performance than these using RNNs since the Brownian motion $B_t$ has such a well-known fine property named Markov property.
It means that to forecast the information in the future, we only need to know the information at the moment without considering what happened in the past, 
which makes the RNN structure lose its advantage. 
\begin{figure}
\centering
\includegraphics[width=.9\textwidth]{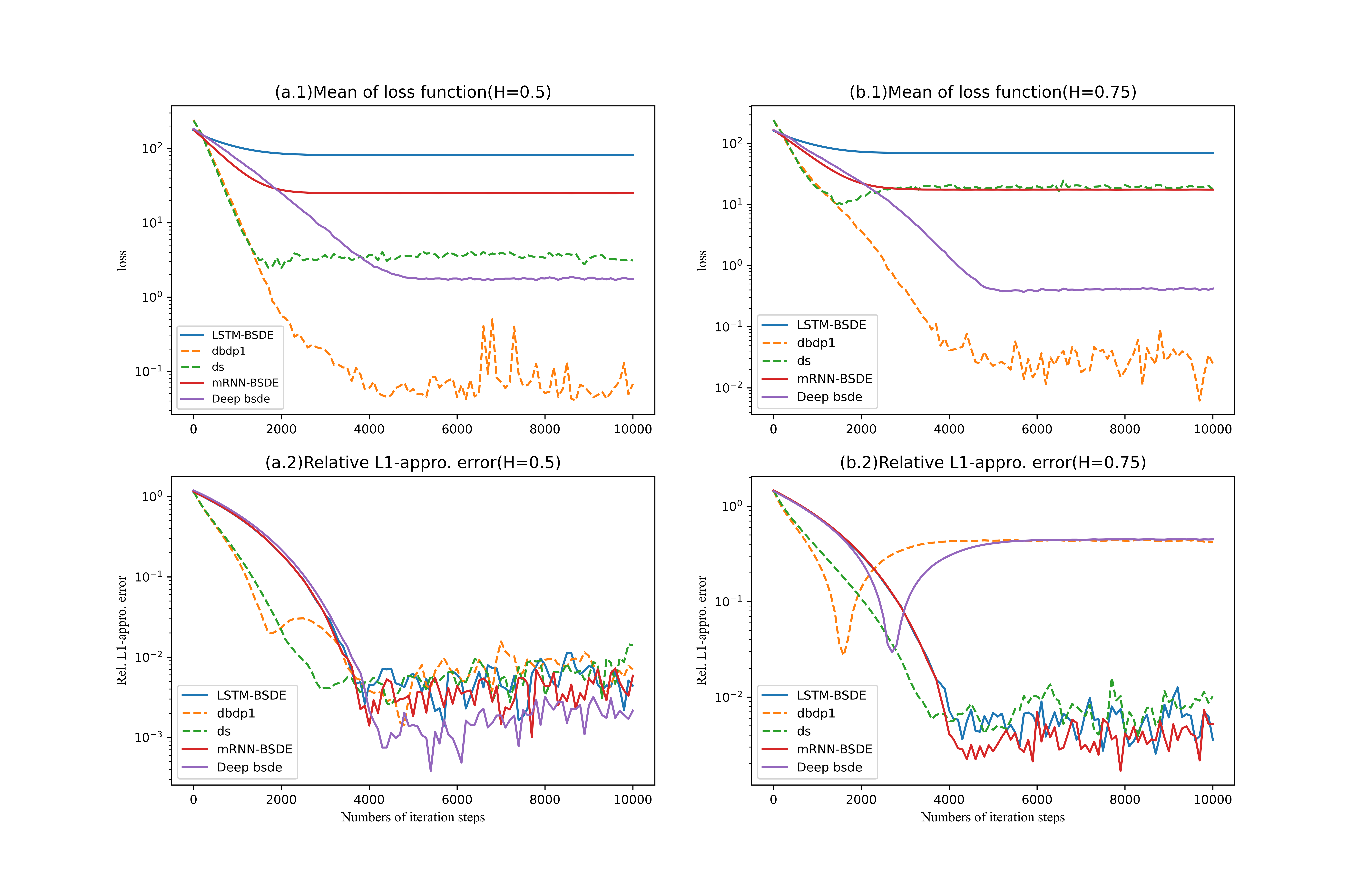}
\caption{Mean of the loss function and relative $L^1$-approximation error of $u(0,x_0)$ in the $1-d$ case of the PDE \eqref{eqn33}.
(a.1) mean of the loss function when $H = 1/2$; (a.2)relative $L^1$-approximation error when $H = 1/2$;
(b.1) mean of the loss function when $H = 3/4$; (b.2)relative $L^1$-approximation error when $H = 3/4$.\label{fig3}}
\end{figure}

\begin{table}
\caption{Numerical simulations for each method in the $1-d$ case of the PDE \eqref{eqn33} with $H = \frac{1}{2}(u_{true} = 7.1559)$.\label{tab1}}
\begin{tabular}{lccclc}
\hline
                            & \begin{tabular}[c]{@{}c@{}}Means of\\ $u_0$\end{tabular} & \begin{tabular}[c]{@{}c@{}}Std of\\ $u_0$\end{tabular} & \begin{tabular}[c]{@{}c@{}}Rel.$L^1$\\ error\end{tabular} & \begin{tabular}[c]{@{}l@{}}Std of\\ rel.error\end{tabular} & \begin{tabular}[c]{@{}c@{}}Avg.runtime\\ /s\end{tabular} \\ \hline
deep BSDE                 & 7.1565                                                   & 5.82e-3                                                & 5.88e-4                                                   & 5.11e-4                                                    & 266                                                      \\
LSTM-BSDE                 & 7.1458                                                   & 1.98e-2                                                & 2.43e-3                                                   & 1.77e-3                                                    & 394                                                      \\
mRNN-BSDE                 & 7.1502                                                   & 1.84e-2                                                & 2.20e-3                                                   & 1.40e-3                                                    & 174                                                      \\
\multicolumn{1}{c}{DS}    & 7.1400                                                   & 3.01e-2                                                & 3.74e-3                                                   & 2.72e-3                                                    & 795                                                      \\
\multicolumn{1}{c}{DBDP1} & 7.1586                                                   & 9.54e-3                                                & 9.63e-4                                                   & 8.15e-4                                                    & 636                                                      \\ \hline
\end{tabular}
\end{table}
\newpage
Focus on the numerical experiments with $H = \frac{3}{4}$, the explicit solution $u(0, x_0)$ in \eqref{eqn34} is around 6.2968, in this case,
the LSTM-BSDE method and mRNN-BSDE begin to make a difference, $\widetilde{u}(0, x_0)$ by the LSTM-BSDE method is close to the true value $u(0, x_0)$ while the deep BSDE method and 
DBDP1 both offer the results of $\widetilde{u}(0, x_0)$ which don't converge to the true value after 10000 iterations. But the interesting thing is that deep splitting is also an effective method of solving fBSDEs and corresponding PDEs even 
without an RNN. The reason can be known from the idea and these loss functions of the deep splitting method introduced by \cite{Beck_2021}, such loss functions help us to avoid estimating
the integral of $Z$ w.r.t $B_t^H$, i.e. $\int_{}^{} Z_s \,dB_s^H$, directly.

\begin{table}
\caption{Numerical simulations for each method in the $1-d$ case of the PDE \eqref{eqn33} with $H = \frac{3}{4}(u_{true} = 6.2968)$.\label{tab2}}
\begin{tabular}{lccclc}
\hline
                            & \begin{tabular}[c]{@{}c@{}}Means of\\ $u_0$\end{tabular} & \begin{tabular}[c]{@{}c@{}}Std of\\ $u_0$\end{tabular} & \begin{tabular}[c]{@{}c@{}}Rel.$L^1$\\ error\end{tabular} & \begin{tabular}[c]{@{}l@{}}Std of\\ rel.error\end{tabular} & \begin{tabular}[c]{@{}c@{}}Avg.runtime\\ /s\end{tabular} \\ \hline
deep BSDE                 & 3.4685                                                   & 3.70e-3                                                & 0.4491                                                    & 5.88e-4                                                    & 272                                                     \\
LSTM-BSDE                 & 6.3076                                                   & 2.06e-2                                                & 3.12e-3                                                   & 1.30e-3                                                    & 430                                                      \\
mRNN-BSDE                 & 6.2970                                                   & 9.81e-3                                                & 1.11e-3                                                   & 9.38e-4                                                    & 184                                                      \\
\multicolumn{1}{c}{DS}    & 6.2687                                                   & 1.58e-2                                                & 4.57e-3                                                   & 2.51e-3                                                    & 795                                                      \\
\multicolumn{1}{c}{DBDP1} & 3.5568                                                   & 2.29e-2                                                & 0.4351                                                    & 3.64e-3                                                    & 619                                                      \\ \hline
\end{tabular}
\end{table}

\subsubsection{Results in the high-dimension case($d=50$)}
In the high-dimensional case, the fractional Black-scholes equation has the form
\begin{equation}
\label{eqn36}
\begin{cases}
\begin{aligned}
&\frac{\partial u(t,x)}{\partial t} + \sum_{i = 1}^{d}\sigma^2 H x_j^2 t^{2H-1} \frac{\partial^2 u(t,x)}{\partial x_i^2}+\sum_{i = 1}^{d}r x_i \frac{\partial u(t,x)}{\partial x_i}-r u(t,x)=0,\\
&u(T,x)=g(x),\\   
\end{aligned}    
\end{cases}
\end{equation}
where $g(X_T) = \max\left\{ \max_{1 \le i \le d}X_T^i - K , 0 \right\}$. And in this case, 
there is no known analytical solution which is different from the 1-dimensional case.

We choose $d = 50$ as the high-dimension case, $0 = t_0 < t_1 < \dots < t_N = T$, $T = 0.5$, $N = 20$, then $\Delta t = T/N = 0.025$. For the parameters, assume
$\mu = 0.06$, $\sigma = 0.2$, $x_0 = (100, 100, \dots, 100) \in \mathbb{R}^d$, $r = 0.06$, $K = 100$. Learning rate $lr = 0.008$, the valid set size $m_1 = 256$ and the mini-batch size $m_2 = 64$. To approximate $u(0,x_0)$,
there will be 5 independent runs for each of the methods.

On principle, we still try our best to keep hyperparameters same for all algorithms but we can hardly ignore the difference between different methods especially in the high-dimensional 
case. For DBDP1, we set $n = 11$ neuros on each hidden layers because if we set $n = d+10$, $\widetilde{u}(0, x_0)$ is slow to converge under $lr = 0.008$. Though we can choose to increase
learning rate for DBDP1, we have finally chosen to keep the number of neuros on each hidden layers same as the 1-dimensional case by comparing the results.

Firstly, we also make a comparison between these methods with $H = \frac{1}{2}$ in the high-dimensional case. Similar to the one-dimensional case with $H = \frac{1}{2}$,
the values of $\widetilde{u}(0, x_0)$ are all close whichever method we use as shown in Fig \ref{fig4} and Table \ref{tab3}.
\begin{figure}
\centering
\includegraphics[width=.9\textwidth]{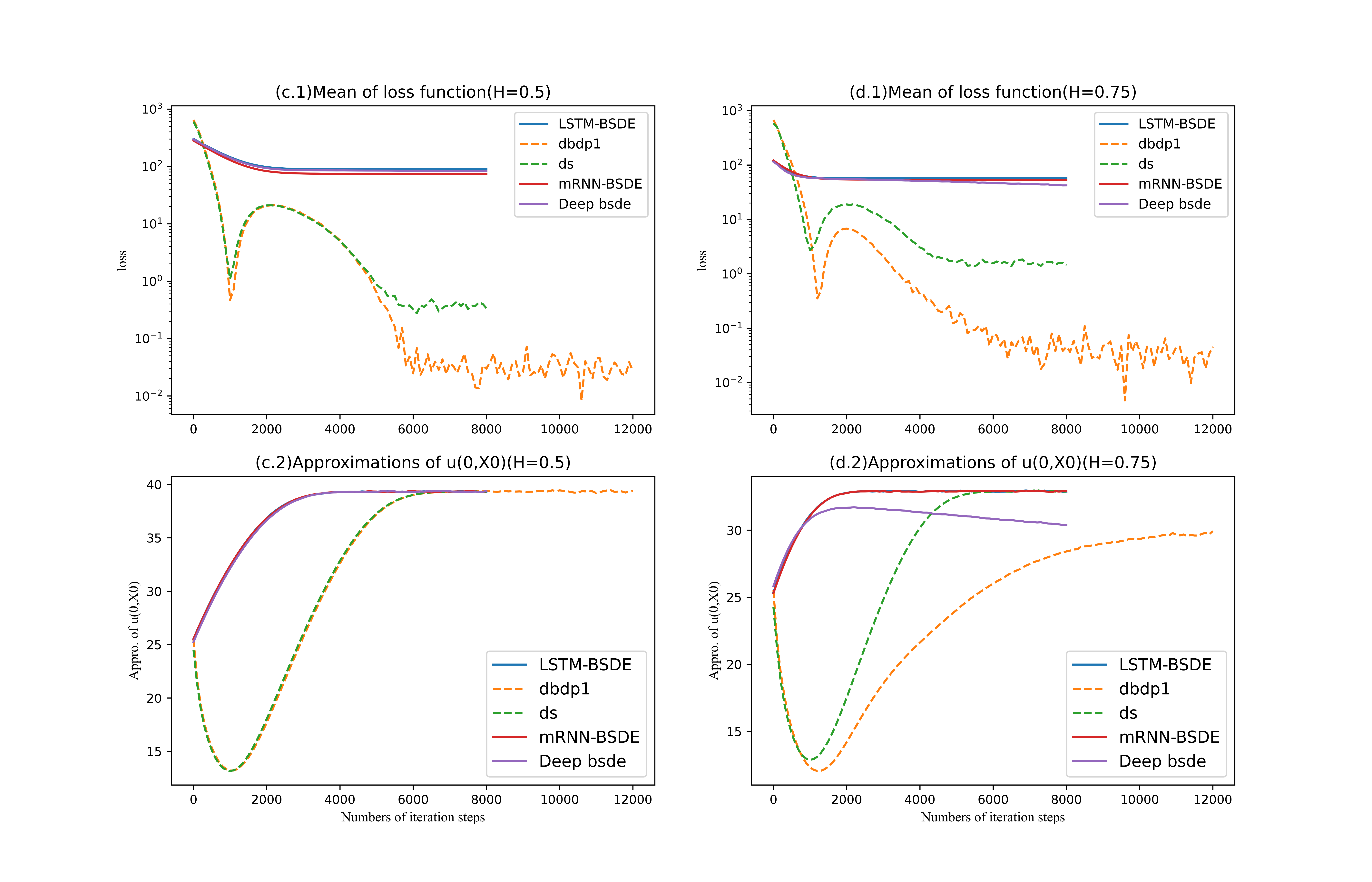}
\caption{Mean of the loss function and mean of the approximations of $u(0,x_0)$ in the $50-d$ case of the PDE \eqref{eqn36}.
(c.1) mean of the loss function when $H = 1/2$; (c.2)mean of the approximations of $u(0,x_0)$ when $H = 1/2$;
(d.1) mean of the loss function when $H = 3/4$; (d.2)mean of the approximations of $u(0,x_0)$ when $H = 3/4$.\label{fig4}}
\end{figure}

\begin{table}
\caption{Numerical simulations for each method in the $50-d$ case of the PDE \eqref{eqn36} with $H = \frac{1}{2}$.\label{tab3}}
\begin{tabular}{lcccc}
\hline
                            & \begin{tabular}[c]{@{}c@{}}Mean of\\ $u_0$\end{tabular} & \begin{tabular}[c]{@{}c@{}}Std of\\ $u_0$\end{tabular} & \multicolumn{1}{l}{\begin{tabular}[c]{@{}l@{}}Avg.time to reach\\ $\pm$0.5\% of \\ means of $u_0$/s\end{tabular}} & \begin{tabular}[c]{@{}c@{}}Avg.runtime\\ /s\end{tabular} \\ \hline
deep BSDE                 & 39.3332                                                 & 0.0254                                                 & 181                                                                                                               & 355                                                      \\
LSTM-BSDE                 & 39.3349                                                 & 0.0188                                                 & 565                                                                                                               & 1257                                                     \\
mRNN-BSDE                 & 39.3334                                                 & 0.0185                                                 & 160                                                                                                               & 316                                                      \\
\multicolumn{1}{c}{DS}    & 39.2848                                                 & 0.0160                                                 & 684                                                                                                               & 886                                                     \\
\multicolumn{1}{c}{DBDP1} & 39.3324                                                 & 0.0519                                                 & 550                                                                                                               & 1001                                                     \\ \hline
\end{tabular}
\end{table}

Set $H = \frac{3}{4}$, from Fig \ref{fig4} and Table \ref{tab4}, it can be observed that LSTM-BSDE, mRNN-BSDE and deep splitting are close to the value of $\widetilde{u}(0, x_0)$ while 
DBDP1 and deep BSDE don't offer a convergence value after at least 8000 iterations in this case. 

\begin{table}
\caption{Numerical simulations for each method in the $50-d$ case of the PDE \eqref{eqn36} with $H = \frac{3}{4}$.\label{tab4}}
\begin{tabular}{lcccc}
\hline
                            & \begin{tabular}[c]{@{}c@{}}Mean of\\ $u_0$\end{tabular} & \begin{tabular}[c]{@{}c@{}}Std of\\ $u_0$\end{tabular} & \multicolumn{1}{l}{\begin{tabular}[c]{@{}l@{}}Avg.time to reach\\ $\pm$0.5\% of \\ means of $u_0$/s\end{tabular}} & \begin{tabular}[c]{@{}c@{}}Avg.runtime\\ /s\end{tabular} \\ \hline
deep BSDE                 & NC                                                      & NC                                                     & NC                                                                                                                & NC                                                       \\
LSTM-BSDE                 & 32.9005                                                 & 2.21e-2                                                & 341                                                                                                               & 1272                                                     \\
mRNN-BSDE                 & 32.8942                                                 & 4.78e-3                                                & 100                                                                                                               & 342                                                     \\
\multicolumn{1}{c}{DS}    & 32.9015                                                 & 1.96e-2                                                & 588                                                                                                               & 881                                                    \\
\multicolumn{1}{c}{DBDP1} & NC                                                      & NC                                                     & NC                                                                                                                & NC                                                     \\ \hline
\end{tabular}
\end{table}

\subsection{Nonlinear fractional Black-Scholes equation with different interest rates for borrowing and lending($d=100$)}
Next, we gave some numerical experiments for calculating approximate solutions of some nonlinear parabolic PDEs by using mRNN-BSDE and LSTM-BSDE.
Comparing with classical linear Black-Scholes equations, nonlinear Black-Scholes equations are under more realistic assumptions and there are many types of them. Here we 
have considered a nonlinear fractional Black-Scholes equation with different interest rates for borrowing and lending, which is 
\begin{equation}
\label{eqn49} 
\begin{cases}
\begin{aligned}
& \frac{\partial u(t,x)}{\partial t} + \sum_{i = 1}^{d}\sigma^2 H x_j^2 t^{2H-1} \frac{\partial^2 u(t,x)}{\partial x_i^2}+\sum_{i = 1}^{d}\mu x_i \frac{\partial u(t,x)}{\partial x_i} \\
&+ f(t,x,u(t,x),(\nabla u\sigma)(t,x)) = 0,\\
& f(t,x,y,z) = -r^ly-\frac{\mu-r^l}{\sigma}\sum_{i = 1}^{d}z_i  + (r^b-r^l)\max\left\{0,\left[\frac{1}{\sigma}\sum_{i = 1}^{d}z_i\right]-y\right\} ,\\
\end{aligned}   
\end{cases}
\end{equation}
and 
\begin{equation}
g(x) = \max\left\{\left[\max_{1\le i \le 100}x_i\right]-120,0\right\}-2\max\left\{\left[\max_{1\le i \le 100}x_i\right]-150,0\right\}.
\end{equation}

\begin{sloppypar}
Assume $d = 100$, $T = 0.5$, $N = 20$, $\mu = 0.06$, $\sigma = 0.2$, $x_0 = (100, 100, \dots, 100) \in \mathbb{R}^d$, $r^l = 0.04$, $r^b = 0.06$, $H = 0.75$. As for the network setting,
assume learning rate $lr = 0.005$, the valid set size $m_1 = 256$ and the mini-batch size $m_2 = 64$. For both mRNN-BSDE method and LSTM-BSDE method, set one $d$-dimensional input layer,
two $d+10$-dimensional hidden layers and one $d$-dimensional output layer. To approximate $u(0,x_0)$,
there will be 5 independent runs for each of the methods. The true value of $u(0,x_0)$ of Equation \eqref{eqn49} is replaced by the reference value $u(0,x_0) = 17.0848$ which 
is calculated by deep splitting.
\end{sloppypar}

\begin{figure}
\centering
\includegraphics[width=.9\textwidth]{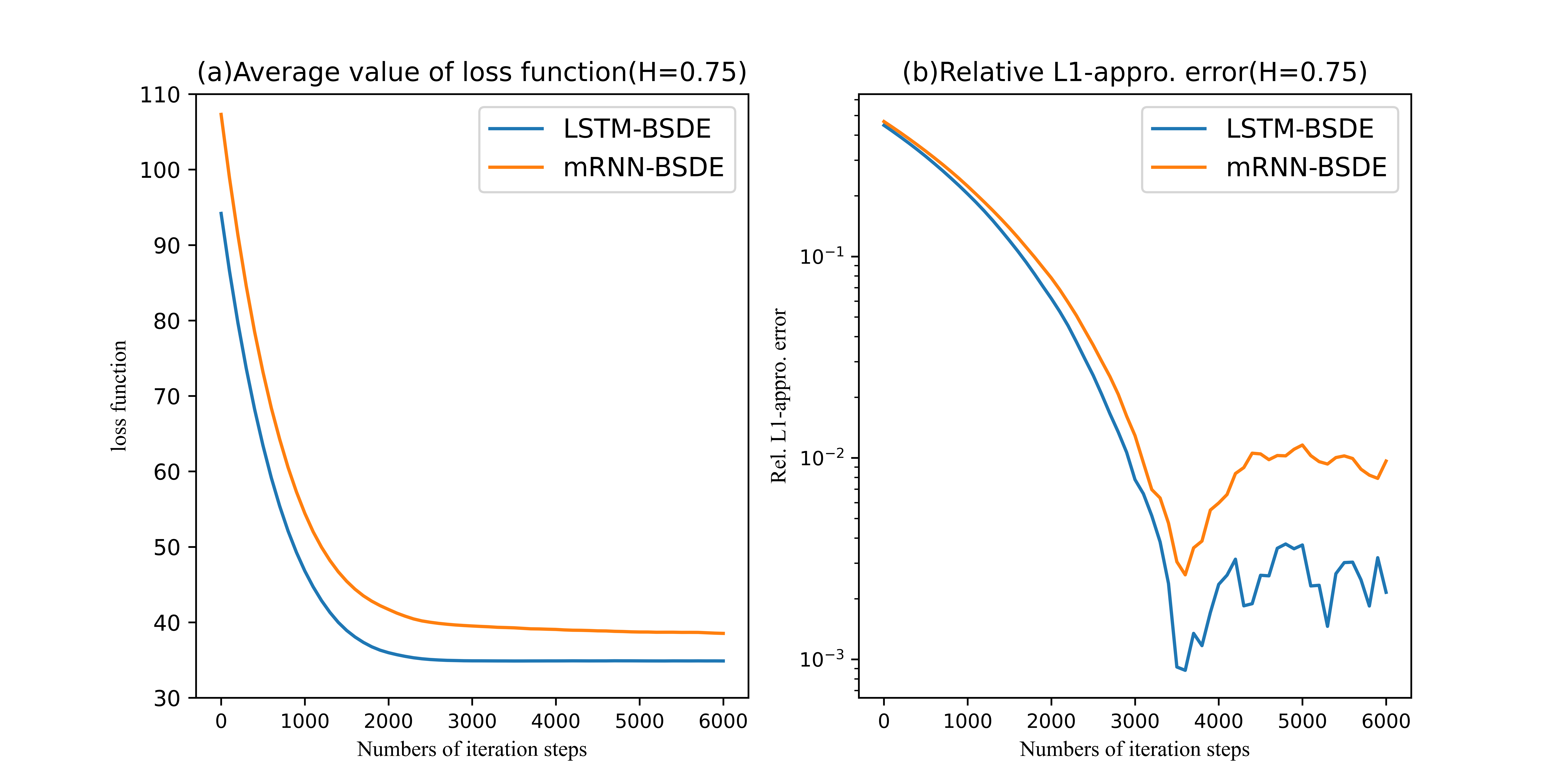}
\caption{Mean of the loss function and relative $L^1$-approximation error of $u(0,x_0)$ in the $100-d$ case of the PDE \eqref{eqn49}.
(a) mean of the loss function ; (b)relative $L^1$-approximation error of $u(0,x_0)$ when $H = 3/4$.\label{fig5}}
\end{figure}

\begin{table}
\caption{Numerical simulations for RNN-BSDE method in the $100-d$ case of the PDE \eqref{eqn49} with $H = \frac{3}{4}$.\label{tab5}}
\begin{tabular}{lccccc}
\hline
            & \begin{tabular}[c]{@{}c@{}}Mean of\\ $u_0$\end{tabular} & \begin{tabular}[c]{@{}c@{}}Std of\\ $u_0$\end{tabular} & \begin{tabular}[c]{@{}c@{}}Rel.$L^1$\\ error\end{tabular} & \begin{tabular}[c]{@{}c@{}}Std of\\  rel.error\end{tabular} & \begin{tabular}[c]{@{}c@{}}Avg.runtime\\ /s\end{tabular} \\ \hline
LSTM-BSDE & 17.0418                                                 & 1.39e-2                                                & 2.53e-3                                                   & 8.12e-4                                                     & 2732                                                     \\
mRNN-BSDE & 16.9238                                                 & 1.08e-2                                                & 9.59e-3                                                   & 6.33e-4                                                     & 479                                                     \\ \hline
\end{tabular}
\end{table}

\clearpage
\subsection{A semilinear heat equation with variable coefficients($d = 50$)}
In this subsection, we consider a type of semilinear heat equation with variable coefficients of the form
\begin{equation}
\label{eqn50}
\frac{\partial u(t,x)}{\partial t} + \sigma^2 H t^{2H-1}\Delta_x u(t,x) + \frac{1-\left\lvert u(t,x)\right\rvert^2 }{1 + \left\lvert u(t,x)\right\rvert^2} = 0,
\end{equation}
and
\begin{equation}
g(x) = \frac{5\exp\left(T^{2H}\right) }{10 + 2\left\lVert x\right\rVert^2_{\mathbb{R}^d}}.    
\end{equation} 

Assume $d = 50$, $T = 0.5$, $N = 20$, $\mu(t,x) = 0$, $\sigma(t,x) = \sigma = 1$, $x_0 = (0, 0, \dots, 0) \in \mathbb{R}^d$, $H = 2/3$ and assume $lr = 0.008$, the valid set size $m_1 = 256$ 
and the mini-batch size $m_2 = 64$. For both mRNN-BSDE method and LSTM-BSDE method, set one $d$-dimensional input layer,
two $d+10$-dimensional hidden layers and one $d$-dimensional output layer. To approximate $u(0,x_0)$,
there will be 5 independent runs for each of the methods. The true value of $u(0,x_0)$ of Equation \eqref{eqn50} is replaced by the reference value $u(0,x_0) = 0.5390$ which 
is calculated by deep splitting.

In this case, from Fig.\ref{fig6}, we can find mRNN-BSDE doesn't offer any convergence value after at least 10000 iterations 
while we get a convergence value more quickly by LSTM-BSDE.

\begin{figure}
\centering
\includegraphics[width=.9\textwidth]{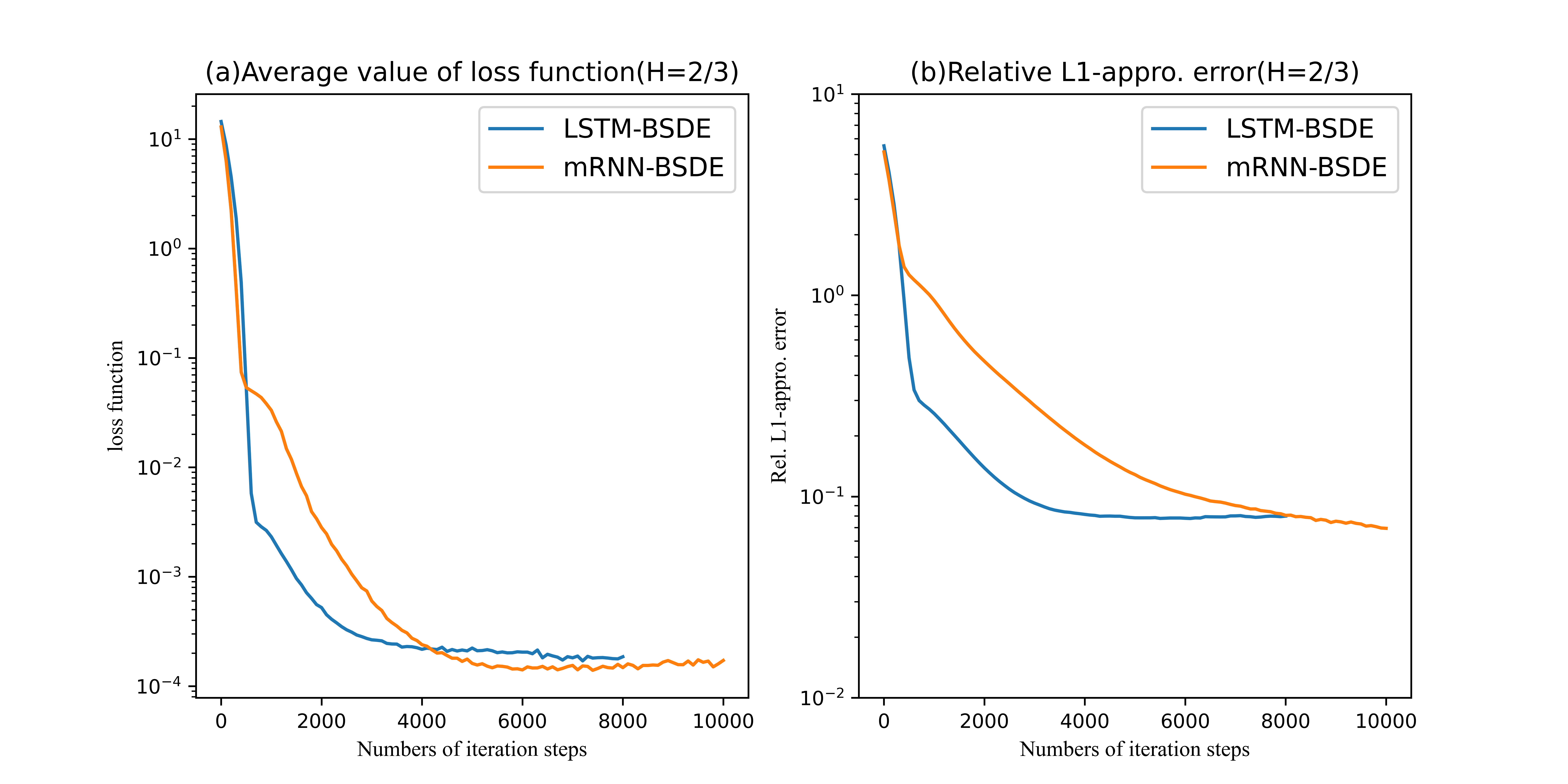}
\caption{Mean of the loss function and relative $L^1$-approximation error of $u(0,x_0)$ in the $50-d$ case of the PDE \eqref{eqn50}.
(a) mean of the loss function ; (b)relative $L^1$-approximation error of $u(0,x_0)$ when $H = 2/3$.\label{fig6}}
\end{figure}

\begin{table}
\caption{Numerical simulations for RNN-BSDE method in the $50-d$ case of the PDE \eqref{eqn50} with $H = \frac{2}{3}$.\label{tab6}}
\begin{tabular}{llcccc}
\hline
            & \multicolumn{1}{c}{\begin{tabular}[c]{@{}c@{}}Mean of\\ $u_0$\end{tabular}} & \begin{tabular}[c]{@{}c@{}}Std of\\ $u_0$\end{tabular} & \begin{tabular}[c]{@{}c@{}}Rel.$L^1$\\ error\end{tabular} & \begin{tabular}[c]{@{}c@{}}Std of\\  rel.error\end{tabular} & \begin{tabular}[c]{@{}c@{}}Avg.runtime\\ /s\end{tabular} \\ \hline
LSTM-BSDE & 0.5818                                                                     & 1.22e-3                                                & 7.93e-2                                                   & 2.27e-3                                                     & 1698                                                     \\
mRNN-BSDE & NC                                                                         & NC                                                     & NC                                                        & NC                                                          & NC                                                     \\ \hline
\end{tabular}
\end{table}
\newpage
\section{Conclusion}
Fix $H \in (\frac{1}{2},1)$, in this paper, we have discussed the relationship between the systems of PDEs and the fFBSDEs where $\int_{0}^{t} f_s \, dB_s^H$ is in the sense of a Wick integral. 
And we have given \eqref{eqn32} which is the PDE corresponding to the fFBSDEs where the solution solves the forward SDE \eqref{eqn29} is called geometric fractional Brownian motion and 
significant in finance. What's more, we have developed the RNN-BSDE method designed to solve fBSDEs. By numerical experiments, it can be observed that 
deep splitting and RNN-BSDE method are effective to solve fractional BSDEs and corresponding PDEs comparing with other methods. 
Comparing with the LSTM and stacked-RNN structure, LSTM-BSDE has a better performance. In general cases, 
the LSTM structure may be a good choice for our RNN-BSDE method regardless of time cost. 
\color{black}




\bibliographystyle{elsarticle-num} 
\bibliography{paper1-refs}






\end{document}